%
\documentclass[12pt, reqno]{amsart}
\usepackage{amsmath, amsthm, amscd, amsfonts, amssymb, graphicx, color}
\usepackage[bookmarksnumbered, colorlinks, plainpages]{hyperref}
\hypersetup{colorlinks=true,linkcolor=red, anchorcolor=green, citecolor=cyan, urlcolor=red, filecolor=magenta, pdftoolbar=true}

\textheight 22.5truecm \textwidth 14.5truecm
\setlength{\oddsidemargin}{0.35in}\setlength{\evensidemargin}{0.35in}

\setlength{\topmargin}{-.5cm}

\newtheorem{theorem}{Theorem}[section]
\newtheorem{lemma}[theorem]{Lemma}

\newtheorem{corollary}[theorem]{Corollary}
\theoremstyle{definition}

\theoremstyle{remark}
\newtheorem{remark}[theorem]{Remark}
\numberwithin{equation}{section}

\begin{document}

\setcounter{page}{1}

\title[Derivations and 2-local derivations]{Derivations and 2-local derivations on matrix algebras and  algebras of locally measurable operators}

\author[W. Huang, J. Li, \MakeLowercase{and}  W. Qian]{Wenbo Huang,$^1$  Jiankui  Li,$^1$$^{*}$ \MakeLowercase{and} Wenhua Qian$^2$}

\address{$^{1}$Department of Mathematics, East China University of Science and Technology, Shanghai 200237, China.}
\email{\textcolor[rgb]{0.00,0.00,0.84}{huangwenbo2015@126.com;
jiankuili@yahoo.com}}

\address{$^{2}$College of Mathematical Sciences, Chongqing Normal University, Shapingba District, Chongqing 401331, China.}
\email{\textcolor[rgb]{0.00,0.00,0.84}{whqian86@163.com}}

\subjclass[2010]{Primary 46L57; Secondary 47B47, 47C15, 16B25.}

\keywords{derivation, 2-local derivation, locally measurable operator, von Neumann algebra}

\date{Received: xxxxxx; Revised: yyyyyy; Accepted: zzzzzz.
\newline \indent $^{*}$Corresponding author}

\begin{abstract}
Let $\mathcal{A}$ be a unital algebra over $\mathbb{C}$ and $\mathcal{M}$ be a unital  $\mathcal{A}$-bimodule.   We show that every derivation $D: M_{n}(\mathcal{A})\rightarrow M_{n}(\mathcal{M}),$ $n\geq2,$  can be represented as a sum $D=D_{m}+\overline{\delta},$ where $D_{m}$ is an inner derivation  and $\overline{\delta}$ is a derivation  induced by a derivation $\delta$ from $\mathcal{A}$ into $\mathcal{M}.$ If $\mathcal{A}$   commutes with $\mathcal{M}$ we prove that every  2-local inner  derivation $\Delta: M_{n}(\mathcal{A}) \to M_{n}(\mathcal{M})$, $n \ge 2$, is an inner derivation. In addition, If $\mathcal{A}$  is commutative and  commutes with $\mathcal{M},$ then every 2-local derivation $\Delta: M_{n}(\mathcal{A}) \to M_{n}(\mathcal{M})$, $n \ge 2$, is a derivation. Let $\mathcal{R}$ be  a finite  von Neumann algebra of type $\textrm{I}$ with  center $\mathcal{Z}$ and $LS(\mathcal{R})$ be the algebra of locally measurable operators affiliated with $\mathcal{R}.$ We also prove that if  the lattice $\mathcal{Z}_{\mathcal{P}}$ of all  projections in $\mathcal{Z}$ is an atomic, then every derivation $D:\mathcal{R}\rightarrow LS(\mathcal{R})$ is an inner derivation.
\end{abstract} \maketitle

\section{Introduction}
Let $\mathcal{A}$ be an algebra over $\mathbb{C}$ the field of complex numbers and $\mathcal{M}$ be an $\mathcal{A}$-bimodule. A linear map $\delta$ from $\mathcal{A}$ into $\mathcal{M}$ is called a \emph{Jordan derivation} if $\delta(a^{2})=\delta(a)a+a\delta(a)$ for each $a$ in $\mathcal{A}$. A linear map $\delta$ from $\mathcal{A}$ into $\mathcal{M}$ is called a\emph{ derivation} if $\delta(ab)=\delta(a)b+a\delta(b)$ for each $a,b$ in $\mathcal{A}$. Let $m$ be an element in $\mathcal{M},$ the map $\delta_{m}:\mathcal{A}\rightarrow \mathcal{M}, ~a\rightarrow \delta_{m}(a):=ma-am,$ is a derivation. A derivation $\delta:\mathcal{A}\rightarrow \mathcal{M}$
 is said to be an \emph{inner derivation} when it can be written in the form $\delta=\delta_{m}$ for some $m$ in $\mathcal{M}.$  A fundamental result, due to Sakai \cite {SA}, states that every derivation on a von Neumann algebra is an inner derivation.

An algebra $\mathcal{A}$ is called  \emph {regular} (in the sense of von Neumann) if for each $a$ in $\mathcal{A}$ there exists $b$ in $\mathcal{A}$ such that $a=aba.$
Let $\mathcal{R}$ be a von Neumann algebra. We denote $S(\mathcal{R})$ and $LS(\mathcal{R})$ respectively the algebras of all measurable and locally measurable operators affiliated with $\mathcal{R}.$ For a faithful normal semi-finite trace $\tau$ on $\mathcal{R},$ we denote the algebra of all $\tau$-measurable operators from $S(\mathcal{R})$ by $S(\mathcal{R},\tau)$
(cf. \cite{AAK1, AK2, N}). If $\mathcal{R}$ is an abelian von Neumann algebra then it is $*$-isomorphic to the algebra $L^{\infty}(\Omega)=L^{\infty}(\Omega,\Sigma,\mu)$
of all (classes of equivalence of) essentially bounded measurable complex functions on a measurable  space $(\Omega,\Sigma,\mu)$  and therefore, $LS(\mathcal{R})=S(\mathcal{R})\cong L^{0}(\Omega),$ where $L^{0}(\Omega)=L^{0}(\Omega,\Sigma,\mu)$ is a unital commutative regular  algebra of all measurable complex functions on $(\Omega,\Sigma,\mu).$  In this case
inner derivations on  the algebra $S(\mathcal{R})$ are identically zero, i.e. trivial.

In \cite{BCS1} Ber, Chilin and Sukochev obtain necessary and sufficient conditions for existence of non trivial derivations on commutative regular algebras. In particular they prove that the algebra $L^{0}(0,1)$ of all measurable complex functions on the interval $(0,1)$  admits non trivial derivations. Let $\mathcal{R}$ be a properly infinite von Neumann algebra. In \cite{AK2}, Ayupov and  Kudaybergenov show that every derivation on the algebra $LS(\mathcal{R})$ is an inner derivation.

In 1997, $\breve{S}$emrl \cite{SE}  introduced 2-local derivations and 2-local automorphisms. A map $\Delta : \mathcal{A} \to \mathcal{M}$ (not necessarily linear) is called a \emph {2-local derivation} if, for every $x, y \in \mathcal{A}$, there exists a derivation $D_{x, y}: \mathcal{A} \to \mathcal{M}$ such that $D_{x, y}(x)=\Delta(x)$ and $D_{x,y}(y) = \Delta(y)$. In particular, if, for every $x, y \in \mathcal{A},$ $D_{x, y}$ is an inner derivation,  then we call $\Delta$ is a \emph{2-local inner derivation.} In \cite{NP1} Niazi and Peralta
introduce the notion of weak-2-local derivation (respectively,  $^{*}$-derivation) and prove that every weak-2-local $^{*}$-derivation on $M_{n}$ is a derivation.
2-local derivations and weak-2-local derivations have been investigated by many authors on different algebras and many results have been obtained in \cite{AK1, AK2,AKA1,AKA2,AA, HLAH, KK, NP1,NP2,SE, ZH}.

Let $\mathcal{H}$ be a infinite-dimensional separable Hilbert space. In \cite{SE} $\breve{S}$emrl shows that every 2-local derivation  on $\mathcal{B}(\mathcal{H})$ is a derivation. In \cite{KK} Kim and Kim give a short proof of  that every 2-local derivation on a finite-dimensional complex matrix algebra is a derivation. In \cite{AK1} Ayupov and Kudaybergenov  extend this result to an arbitrary von Neumann algebra. In \cite{AKA1} Ayupov, Kudaybergenov and Alauadinov prove that if $\mathcal{R}$ is a finite von Neumann algebra  of type $\textrm{I}$ without abelian direct summands, then each 2-local derivation on the algebra $LS(\mathcal{R})=S(\mathcal{R})$  is a derivation.  In the same paper, the authors also show that if $\mathcal{R}$ is an abelian von Neumann algebra such that the lattice of all projections in $\mathcal{R}$ is not atomic, then there exists  a 2-local derivation on the algebra $S(\mathcal{R})$ which is not a derivation. In \cite{ZH} Zhang and Li construct an example of a 2-local derivation on the algebra of all triangular complex $2\times 2$ matrices which is not a derivation.

In \cite{AKA1} Ayupov, Kudaybergenov and Alauadinov show that if $\mathcal{A}$ is a  unital  commutative regular algebra, then every 2-local derivation on the algebra $M_{n}(\mathcal{A}),$  $n\geq 2,$  is a derivation. In \cite{AA} Ayupov and Arzikulov show that if $\mathcal{A}$ is a  unital commutative ring, then every  2-local inner derivation on $M_{n}(\mathcal{A}), ~n\geq 2,$  is an inner derivation. Let $\mathcal{A}$ be a unital Banach algebra and $\mathcal{M}$ be a unital $\mathcal{A}$-bimodule. In \cite{HLAH} He, Li, An and Huang   prove that if every Jordan derivation from $\mathcal{A}$ into $\mathcal{M}$ is an inner derivation then every  2-local derivation from $M_{n}(\mathcal{A})$ $(n\geq 3)$ into $M_{n}(\mathcal{M})$ is a derivation.

Throughout this paper, $\mathcal{A}$ is an algebra with unit $1$ over $\mathbb{C}$ and $\mathcal{M}$ is a unital  $\mathcal{A}$-bimodule. We say that \emph {$\mathcal{A}$ commutes with $\mathcal{M}$} if $am=ma$ for every  $a\in\mathcal{A}$ and  $m\in\mathcal{M}$. From now on, $M_{n}(\mathcal{A})$, for $n \geq 2,$ will denote
the algebra of all $n\times n$ matrices over $\mathcal{A}$ with the usual operations. By the way, we denote any element in $M_{n}(\mathcal{A})$ by $(a_{rs})_{n\times n},$ where $r,s\in \{ 1,2,\ldots,n \};$  $E_{ij},$ $i,j\in \{ 1,2,\ldots,n \},$ the matrix units in $M_{n}(\mathbb{C})$; and
$x\otimes E_{ij},$ the matrix whose $(i,j)$-th entry is $x$ and zero elsewhere. We use $A_{ij}$ for the $(i,j)$-th entry of $A\in M_{n}(\mathcal{A})$ and denote $diag(x_{1},\ldots, x_{n})$ or $diag(x_{i})$ the diagonal matrix with entries $x_{i}\in\mathcal{A}$, $i\in {\{1,2,\ldots,n\}},$  in the diagonal positions.

Let  $\delta:\mathcal{A}\rightarrow\mathcal{M}$ be a derivation. Setting
\begin{align}
\overline{\delta}((a_{ij})_{n\times n})=(\delta(a_{ij}))_{n\times n},\label{701}
\end{align}
we obtain a well-defined linear operator from $M_{n}(\mathcal{A})$ into $M_{n}(\mathcal{M}),$ where  $M_{n}(\mathcal{M})$ has a natural structure of $M_{n}(\mathcal{A})$-bimodule.
Moreover $\overline{\delta}$ is a derivation from $M_{n}(\mathcal{A})$ into $M_{n}(\mathcal{M}).$ If $\mathcal{A}$ is a commutative algebra, then  the restriction of $\overline{\delta}$ onto the center of the algebra $M_{n}(\mathcal{A})$ coincides with the given $\delta.$

In this paper we give characterizations of derivations, 2-local  inner derivations and 2-local derivations from $M_{n}(\mathcal{A})$ into $M_{n}(\mathcal{M})$. In Section 2, we  show that a derivation $D: M_{n}(\mathcal{A})\rightarrow M_{n}(\mathcal{M}),$ $n\geq2$, can be decomposed as a sum of an inner derivation and a derivation induced by a derivation from $\mathcal{A}$ to $\mathcal{M}$ as \eqref{701}, as follows:
$$D=D_{B}+\overline{\delta}.$$
In addition, the  representation of the above form is unique if and only if  $\mathcal{A}$ commutes with $\mathcal{M}$.
 Let $\mathcal{R}$ be  a finite  von Neumann algebra of type $\textrm{I}$ with  center $\mathcal{Z}$ and $LS(\mathcal{R})$ be the algebra of locally measurable operators affiliated with $\mathcal{R}.$ we prove that if  the lattice $\mathcal{Z}_{\mathcal{P}}$ of all  projections in $\mathcal{Z}$ is an atomic, then every derivation $D:\mathcal{R}\rightarrow LS(\mathcal{R})$ is an inner derivation.

In Section 3, we consider 2-local inner derivations and 2-local derivations from $M_{n}(\mathcal{A})$ into $M_{n}(\mathcal{M})$. For the case that $\mathcal{A}$ commutes with $\mathcal{M}$, we obtain that every inner 2-local derivation from $M_{n}(\mathcal{A})$ into $M_{n}(\mathcal{M})$ is an inner derivation.
In addition,  if $\mathcal{A}$ is commutative, we prove that every 2-local derivation $\Delta: M_{n}(\mathcal{A})\rightarrow M_{n}(\mathcal{M})$, $n\geq2$, is a derivation. Let $\mathcal{R}$ be an arbitrary von Neumann algebra without abelian direct summands. We also show  every 2-local derivation $\Delta: \mathcal{R}\rightarrow LS(\mathcal{R})$ is a derivation.

\section{Derivations}

Let $\mathcal{A}$ be an algebra with unit $1$ over $\mathbb{C}$ and $\mathcal{M}$ be a unital $\mathcal{A}$-bimodule. Let  $D: M_{n}(\mathcal{A})\rightarrow M_{n}(\mathcal{M}),$ $n\geq2,$ be a derivation.
Firstly, we define a map $D^{ij}_{rs}: \mathcal{A}\rightarrow\mathcal{M}$ by
$$D^{ij}_{rs}(a)=[D(a\otimes E_{rs})]_{ij}, ~a\in \mathcal{A},~i,j,r,s\in{\{1,2,\ldots,n\}}.$$
For any $a,b\in \mathcal{A}$  and some fixed $m\in{\{1,2,\ldots,n\}}, $ we have
\begin{align*}
D^{ij}_{rs}(ab)
&=[D(ab\otimes E_{rs})]_{ij}\notag\\
&=[D((a\otimes E_{rm})(b\otimes E_{ms}))]_{ij}\notag\\
&=[D(a\otimes E_{rm})(b\otimes E_{ms})]_{ij}+[(a\otimes E_{rm})D(b\otimes E_{ms})]_{ij}\notag\\
&=\delta_{js}[D(a\otimes E_{rm})]_{im}b+\delta_{ir}a[D(b\otimes E_{ms})]_{mj},
\end{align*}
where $\delta$ is the Kronecker's delta.  It follows that
\begin{align}
D^{ij}_{rs}(ab)=\delta_{js}[D(a\otimes E_{rm})]_{im}b+\delta_{ir}a[D(b\otimes E_{ms})]_{mj}. \label{705}
\end{align}
 For any  $m\in{\{1,2,\ldots,n\}},$  we deduce from the equality  \eqref{705} that,
$$D^{mm}_{mm}(ab)=D^{mm}_{mm}(a)b+aD^{mm}_{mm}(b),$$
thus  $D^{mm}_{mm}:\mathcal{A}\rightarrow \mathcal{M}$ is a derivation.   We abbreviate the derivation $D^{mm}_{mm}$  by $D^{m}.$ Particularly, we denote
the derivation $D^{11}_{11}$ by $D^{1}.$

\begin{theorem} \label{thm 2.1}
Every derivation $D: M_{n}(\mathcal{A})\rightarrow M_{n}(\mathcal{M}),$ $n\geq2,$  can be represented as a sum
\begin{align}
D=D_{B}+\overline{\delta}, \label{703}
\end{align}
where $D_{B}$ is an inner derivation implemented by an element $B\in M_{n}(\mathcal{M})$ and $\overline{\delta}$ is a derivation of the form \eqref{701} induced by a derivation $\delta$ from $\mathcal{A}$ into $\mathcal{M}$. Furthermore, if this representation is unique for every derivation $D$, then $\mathcal{A}$ commutes with $\mathcal{M}$ (i.e. $am=ma$ for every $a\in\mathcal{A},~m\in\mathcal{M}$); and if  $\mathcal{A}$ commutes with $\mathcal{M}$ then this representation is always unique.
\end{theorem}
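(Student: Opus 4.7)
The plan is to prove existence by extracting from $D$ an induced derivation $\delta : \mathcal{A} \to \mathcal{M}$ together with an explicit inner implementer $B$, and then to settle both directions of the uniqueness characterization by evaluating the decomposition on scalar matrices of the form $a \otimes I$.

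For the existence part, the natural candidate is $\delta := D^{1} = D^{11}_{11}$, which the excerpt already identifies as a derivation. Forming $\widetilde{D} := D - \overline{\delta}$, the map $\widetilde{D}$ is a derivation with the crucial property $\widetilde{D}^{1} \equiv 0$, i.e., $[\widetilde{D}(a \otimes E_{11})]_{11} = 0$ for all $a \in \mathcal{A}$. I would then define $B \in M_{n}(\mathcal{M})$ by
\[
B_{ki} := [\widetilde{D}(1 \otimes E_{i1})]_{k1},
\]
and verify that equivalently $B_{jl} = -[\widetilde{D}(1 \otimes E_{1j})]_{1l}$; this compatibility follows by differentiating the relation $(1 \otimes E_{1j})(1 \otimes E_{l1}) = \delta_{jl}(1 \otimes E_{11})$ (here $\delta_{jl}$ is the Kronecker symbol) and reading the $(1,1)$-entry, since the right-hand side is $\delta_{jl}\widetilde{D}^{1}(1) = 0$. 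The verification that $\widetilde{D} = D_{B}$ then proceeds by writing $a \otimes E_{rs} = (1 \otimes E_{r1})(a \otimes E_{11})(1 \otimes E_{1s})$, applying the Leibniz rule, and noting that the middle summand $(1 \otimes E_{r1})\widetilde{D}(a \otimes E_{11})(1 \otimes E_{1s})$ has a single potentially nonzero entry at $(r,s)$ equal to $\widetilde{D}^{1}(a) = 0$; the outer two summands reproduce $B(a \otimes E_{rs}) - (a \otimes E_{rs})B$ exactly by the definition of $B$, and linearity completes the step.

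For the uniqueness part, any equality of two decompositions reduces to a relation $D_{C} = \overline{\eta}$ with $C \in M_{n}(\mathcal{M})$ and $\eta : \mathcal{A} \to \mathcal{M}$ a derivation. Evaluating on the scalar matrix $a \otimes I$ gives, entry-wise, $C_{ij}a - aC_{ij} = \eta(a)\delta_{ij}$. When $\mathcal{A}$ commutes with $\mathcal{M}$ the left-hand side vanishes identically, forcing $\eta = 0$ and then $D_{C} = 0$, which proves uniqueness of the pair $(D_{B}, \overline{\delta})$. Conversely, if some $a_{0} \in \mathcal{A}$ and $m_{0} \in \mathcal{M}$ satisfy $m_{0}a_{0} \neq a_{0}m_{0}$, the derivation $\delta_{m_{0}}(a) := m_{0}a - am_{0}$ is nonzero, and a short entry-wise computation on matrix units shows $\overline{\delta_{m_{0}}} = D_{m_{0} \otimes I}$; consequently, the single derivation $\overline{\delta_{m_{0}}}$ admits the two distinct decompositions $D_{0} + \overline{\delta_{m_{0}}}$ and $D_{m_{0} \otimes I} + \overline{0}$, contradicting uniqueness.

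The step I expect to be the main obstacle is verifying that the definition of $B$ is well-posed, i.e., that the two natural formulas $B_{ki} = [\widetilde{D}(1 \otimes E_{i1})]_{k1}$ and $B_{jl} = -[\widetilde{D}(1 \otimes E_{1j})]_{1l}$ assign the same value to each entry; once this symmetry is in hand, all remaining arguments are routine Leibniz-rule expansions and entry-by-entry comparisons.
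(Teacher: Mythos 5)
Your proposal is correct and takes essentially the same approach as the paper: your $\delta=D^{1}$ and your $B$ (which equals $(D^{r1}_{s1}(1))_{n\times n}$, since $D^{1}(1)=0$) are exactly the paper's choices, and your uniqueness argument in both directions --- forcing $D_{C}=\overline{\eta}=0$ when $\mathcal{A}$ commutes with $\mathcal{M}$, and the counterexample $\overline{\delta_{m_{0}}}=D_{m_{0}\otimes I}$ otherwise --- mirrors the paper's Claims 1 and 2. The only differences are organizational (you subtract $\overline{\delta}$ first and verify $\widetilde{D}=D_{B}$ via the factorization $a\otimes E_{rs}=(1\otimes E_{r1})(a\otimes E_{11})(1\otimes E_{1s})$, and your forward uniqueness step tests only on $a\otimes I$ rather than first showing the implementing matrix is diagonal), and both verifications, including the sign compatibility for $B$, check out.
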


Before the proof of Theorem \ref{thm 2.1}, we first present the following lemma.
\begin{lemma} \label{lem 2.2}
For every  $~i,j,r,s,m\in {\{1,2,\ldots,n\}}$ and every $a\in\mathcal{A}$ the following equalities hold:
\begin{enumerate}
\item[{\rm(i)}] $D^{ij}_{rs}=0,$  $i\neq r$  and  $j\neq s,$
\item[{\rm(ii)}] $D^{ij}_{rj}(a)=D^{im}_{rm}(a)=D^{im}_{rm}(1)a,$  if  $i\neq r,$
\item[{\rm(iii)}] $D^{ij}_{is}(a)=D^{mi}_{ms}(a)=aD^{mj}_{ms}(1),$  if  $j\neq s,$
\item[{\rm(iv)}] $D^{im}_{jm}(1)=-D^{mj}_{mi}(1),$
\item[{\rm(v)}]  $D^{ij}_{ij}(a)=D^{im}_{im}(1)a-aD^{jm}_{jm}(1)+D^{m}(a).$
\end{enumerate}
\end{lemma}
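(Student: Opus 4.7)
The plan is to derive all five identities from \eqref{705} together with the single additional fact $D(1_{M_n(\mathcal{A})})=0$.

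The first three parts fall out of \eqref{705} alone. Setting $b=1$ gives $D^{ij}_{rs}(a)=\delta_{js}D^{im}_{rm}(a)+\delta_{ir}a\,D^{mj}_{ms}(1)$, while setting $a=1$ gives $D^{ij}_{rs}(a)=\delta_{js}D^{im}_{rm}(1)\,a+\delta_{ir}D^{mj}_{ms}(a)$, both valid for every choice of $m$. When $i\neq r$ and $j\neq s$ the two Kronecker symbols vanish simultaneously, yielding (i). For (ii) the hypothesis $i\neq r$ kills the second summand, and comparing the two forms immediately produces the chain $D^{ij}_{rj}(a)=D^{im}_{rm}(a)=D^{im}_{rm}(1)a$, where the independence of $j$ and of $m$ comes out for free. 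Claim (iii) is the mirror statement, obtained by dropping the first summand under $j\neq s$.

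For (iv) I would use $1_{M_n(\mathcal{A})}=\sum_{k}1\otimes E_{kk}$, which forces $\sum_{k=1}^{n}D(1\otimes E_{kk})=0$. Assume first that $i\neq j$. The $(i,j)$-entry of this identity reads $\sum_{k}D^{ij}_{kk}(1)=0$; by (i) only the summands with $k=i$ or $k=j$ survive, so $D^{ij}_{ii}(1)+D^{ij}_{jj}(1)=0$. Applying (iii) to the first summand and (ii) to the second rewrites the relation as $D^{mj}_{mi}(1)+D^{im}_{jm}(1)=0$, which is exactly (iv). The remaining case $i=j$ is handled separately: the Leibniz rule applied to $1\otimes E_{ii}=(1\otimes E_{im})(1\otimes E_{mi})$, evaluated at the $(i,i)$-entry and combined with $D^{i}(1)=0$, yields $0=D^{im}_{im}(1)+D^{mi}_{mi}(1)$.

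For (v), I would decompose $a\otimes E_{ij}=(1\otimes E_{im})(a\otimes E_{mj})$ and read off the $(i,j)$-entry of the derivation identity to obtain $D^{ij}_{ij}(a)=D^{im}_{im}(1)\,a+D^{mj}_{mj}(a)$; an analogous expansion of $a\otimes E_{mj}=(a\otimes E_{mm})(1\otimes E_{mj})$ produces $D^{mj}_{mj}(a)=D^{m}(a)+a\,D^{mj}_{mj}(1)$. Substitution followed by the replacement $D^{mj}_{mj}(1)=-D^{jm}_{jm}(1)$ supplied by (iv) finishes the proof. The main point that requires care is the bookkeeping of the Kronecker symbols and the edge cases in which some of $i,j,m$ coincide (in particular $m\in\{i,j\}$, and the case $n=2$ in which no $m\notin\{i,j\}$ exists); in those situations the factorisations used in (v) partially trivialise, and one substitutes the one-sided formulas $D^{ij}_{ij}(a)=D^{i}(a)+a\,D^{ij}_{ij}(1)=D^{ij}_{ij}(1)\,a+D^{j}(a)$ coming from the trivial factorisations $(a\otimes E_{ii})(1\otimes E_{ij})$ and $(1\otimes E_{ij})(a\otimes E_{jj})$. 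Beyond this case-checking, the argument is purely mechanical.
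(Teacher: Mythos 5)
Your proof is correct and follows essentially the same route as the paper: parts (i)--(iii) read off directly from the two specializations of \eqref{705}, part (iv) reduces to the identities $D^{im}_{im}(1)+D^{mi}_{mi}(1)=0$ and $D^{ij}_{ii}(1)+D^{ij}_{jj}(1)=0$, and part (v) combines the two Leibniz expansions of $a\otimes E_{ij}$ with (iv). The only cosmetic difference is that for (iv) with $i\neq j$ you extract the key identity from $D(1)=\sum_k D(1\otimes E_{kk})=0$ while the paper uses $0=D\bigl((1\otimes E_{ii})(1\otimes E_{jj})\bigr)$; also, your worry about the cases $m\in\{i,j\}$ is unnecessary, since \eqref{705} and the factorizations $E_{ij}=E_{im}E_{mj}$ remain valid there and the general argument goes through unchanged.
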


\begin{proof}
It obviously follows from \eqref{705} that, statements $\rm(i)$, $\rm(ii)$ and $\rm(iii)$  hold.   We only need to prove $\rm(iv)$ and $\rm(v).$

$\rm(iv)$: In the case $i=j,$ we have
\begin{align*}
0&=[D(1\otimes E_{ii})]_{ii}=[D((1\otimes E_{im})(1\otimes E_{mi}))]_{ii}\notag\\
&=[D((1\otimes E_{im}))(1\otimes E_{mi})]_{ii}+[(1\otimes E_{im})D((1\otimes E_{mi}))]_{ii}\notag\\
&=D^{im}_{im}(1)+D^{mi}_{mi}(1),\notag
\end{align*}
i.e.
\begin{align}
D^{im}_{im}(1)=-D^{mi}_{mi}(1).\label{707}
\end{align}
For the case $i\neq j,$ we have
\begin{align*}
0&= D(0)=[D((1\otimes E_{ii})(1\otimes E_{jj}))]_{ij}\notag\\
&=[D((1\otimes E_{ii}))(1\otimes E_{jj})]_{ij}+[(1\otimes E_{ii})D((1\otimes E_{jj}))]_{ij}\notag\\
&=[D(1\otimes E_{ii})]_{ij}+[D(1\otimes E_{jj})]_{ij}\notag\\
&=D^{ij}_{ii}(1)+D^{ij}_{jj}(1),\notag
\end{align*}
i.e. $$D^{ij}_{jj}(1)=-D^{ij}_{ii}(1).$$
By $\rm(ii)$ , $\rm(iii)$ and equality \eqref{707},  it follows that
$$D^{im}_{jm}(1)=-D^{mj}_{mi}(1).$$
$\rm(v)$: By equality \eqref{705}, we have
\begin{align}
D^{ij}_{ij}(a)=D^{im}_{im}(1)a+D^{mj}_{mj}(a),\label{711}
\end{align}
and
\begin{align}
D^{ij}_{ij}(a)=D^{im}_{im}(a)+aD^{mj}_{mj}(1).\label{713}
\end{align}
Taking $j=m$ in equality \eqref{711}, we obtain that
\begin{align}
D^{im}_{im}(a)=D^{im}_{im}(1)a+D^{m}(a).\label{715}
\end{align}
By equalities \eqref{707}, \eqref{713} and \eqref{715}, it follows that
$$D^{ij}_{ij}(a)=D^{im}_{im}(1)a-aD^{jm}_{jm}(1)+D^{m}(a).$$
The proof is complete.
\end{proof}
Now we are in position to prove Theorem \ref{thm 2.1}.
\begin{proof}[Proof of Theorem \ref{thm 2.1}]
Let $(a_{rs})_{n\times n}$ be an arbitrary element in $M_{n}(\mathcal{A})$ and $D$ be a derivation from  $M_{n}(\mathcal{A})$ into $M_{n}(\mathcal{M})$. For any  $i,j\in {\{1,2,\ldots,n\}},$ it follows from Lemma \ref{lem 2.2} that
\begin{align*}
[D((a_{rs})_{n\times n})]_{ij}&=\sum^{n}_{r,s=1}D^{ij}_{rs}(a_{rs})\notag\\
&=\sum^{n}_{r=1}D^{ij}_{rj}(a_{rj})+\sum^{n}_{s=1}D^{ij}_{is}(a_{is})-D^{ij}_{ij}(a_{ij})\notag\\
&=\sum_{r\neq i}D^{ij}_{rj}(a_{rj})+\sum_{s\neq j}D^{ij}_{is}(a_{is})+D^{ij}_{ij}(a_{ij})\notag\\
&=\sum_{r\neq i}D^{i1}_{r1}(1)a_{rj}+\sum_{s\neq j}a_{is}D^{1j}_{1s}(1)+D^{i1}_{i1}(1)a_{ij}-a_{ij}D^{j1}_{j1}(1)+D^{1}(a_{ij})\notag\\
&=\sum^{n}_{r=1}D^{i1}_{r1}(1)a_{rj}-\sum^{n}_{s=1}a_{is}D^{s1}_{j1}(1)+D^{1}(a_{ij})\notag\\
&=\sum^{n}_{k=1}(D^{i1}_{k1}(1)a_{kj}-a_{ik}D^{k1}_{j1}(1))+D^{1}(a_{ij})\notag\\
&=[(D^{r1}_{s1}(1))_{n\times n}(a_{rs})_{n\times n}-(a_{rs})_{n\times n}(D^{r1}_{s1}(1))_{n\times n}]_{ij}+[\overline{D^{1}}((a_{rs})_{n\times n})]_{ij},\notag
\end{align*}
i.e.
\begin{align}
[D((a_{rs})_{n\times n})]_{ij}=[(D^{r1}_{s1}(1))_{n\times n}(a_{rs})_{n\times n}-(a_{rs})_{n\times n}(D^{r1}_{s1}(1))_{n\times n}]_{ij}+[\overline{D^{1}}((a_{rs})_{n\times n})]_{ij},\label{719}
\end{align}
where $(D^{r1}_{s1}(1))_{n\times n}\in M_{n}(\mathcal{M})$ and $[(D^{r1}_{s1}(1))_{n\times n}]_{ij}=D^{i1}_{j1}(1).$
By equality \eqref{719}, we have
$$D((a_{rs})_{n\times n})=[(D^{r1}_{s1}(1))_{n\times n}(a_{rs})_{n\times n}-(a_{rs})_{n\times n}(D^{r1}_{s1}(1))_{n\times n}]+[\overline{D^{1}}((a_{rs})_{n\times n})].$$
We denote $B=(D^{r1}_{s1}(1))_{n\times n}$ and  $\delta=D^{1}.$ Therefore every derivation $D: M_{n}(\mathcal{A})\rightarrow M_{n}(\mathcal{M}),$ $n\geq2,$  can be represented as a sum
$$D=D_{B}+\overline{\delta}.$$

Suppose that $D_{M}$ is an inner derivation from $M_{n}(\mathcal{A})$ into $M_{n}(\mathcal{M})$  implemented by an element $M\in M_{n}(\mathcal{M}),$ and $\overline{\zeta}$ is a derivation of the form \eqref{701} induced by a derivation $\zeta$ from $\mathcal{A}$ into $\mathcal{M},$ such that $D_{M}=\overline{\zeta}.$ The first step is to establish the following.

$\textbf{Claim~1}.$  If $\mathcal{A}$ commutes with $\mathcal{M}$, then $D_{M}=\overline{\zeta}=0.$

{Proof of Claim 1~} If $i\neq j,~i,j\in {\{1,2,\ldots,n\}},$ we have
$$0=\overline{\zeta}(E_{ij})=D_{M}(E_{ij})=ME_{ij}-E_{ij}M.$$
It follows that $M_{ji}=0.$  Thus $M$ has a diagonal form, i.e. $M=diag(M_{kk}).$
Suppose that $\overline{\zeta}\neq 0,$  then there exists an element $a\in \mathcal{A}$ such that $\zeta(a)\neq0.$
Take $A=diag(a),$ then $ \overline{\zeta}(A)\neq0.$ On the other hand,
$$\overline{\zeta}(A)=D_{M}(A)=diag(M_{kk})diag(a)-diag(a)diag(M_{kk})=0.$$
This is a contradiction. Thus $~\overline{\zeta}=0.$

$\textbf{Claim~2}.$ If $\mathcal{A}$ does not commute with $\mathcal{M}$, then there exist $D_{M}$ and $\overline{\zeta},$ such that ~$D_{M}=\overline{\zeta}\neq0.$

{Proof of Claim 2~}  By assumption, we can take  $a\in\mathcal{A}$ and $ m\in\mathcal{M}$  such that $ma\neq am.$
We define a derivation $\zeta:\mathcal{A}\rightarrow\mathcal{M}$  by
$\zeta(x)=mx-xm$  for every $x$ in $\mathcal{A}.$
We denote $M=diag(m)\in M_{n}(\mathcal{M}),$  then $D_{M}$  is an inner derivation from $ M_{n}(\mathcal{A})$  into $ M_{n}(\mathcal{M}).$
Obviously, $D_{M}=\overline{\zeta}~$  and $~\overline{\zeta}(diag(a))\neq0.$
Thus $D_{M}=\overline{\zeta}\neq0.$

In the following,  we show that the representation of the above form is unique if and only if  $\mathcal{A}$ commutes with $\mathcal{M}.$

$\mathbf{Case~1}.$ If $\mathcal{A}$ commutes with $\mathcal{M},$ we suppose that there exists a derivation $D:M_{n}(\mathcal{A})\rightarrow M_{n}(\mathcal{M}),$ $n\geq2,$ which
can be represented as $D=D_{B_{1}}+\overline{\delta_{1}}=D_{B_{2}}+\overline{\delta_{2}}.$ This means that $D_{B_{1}}-D_{B_{2}}=\overline{\delta_{2}}-\overline{\delta_{1}}.$
Since  $D_{B_{1}}-D_{B_{2}}=D_{B_{1}-B_{2}}$ and $\overline{\delta_{2}}-\overline{\delta_{1}}=\overline{\delta_{2}-\delta_{1}},$
we have $D_{B_{1}-B_{2}}=\overline{\delta_{2}-\delta_{1}}.$
It follows from Claim 1 that, $D_{B_{1}-B_{2}}=\overline{\delta_{2}-\delta_{1}}=0.$ i.e. $D_{B_{1}}=D_{B_{2}}$
and $\overline{\delta_{1}}=\overline{\delta_{2}}.$

$\mathbf{Case~2}.$ If $\mathcal{A}$ does not commute with $\mathcal{M},$ by Claim 2, there exist derivations $D_{M}$ and $\overline{\zeta}$ from $M_{n}(\mathcal{A})$ into $M_{n}(\mathcal{M}),$
$n\geq2,$ such that $D_{M}=\overline{\zeta}\neq 0.$ Let $D:M_{n}(\mathcal{A})\rightarrow M_{n}(\mathcal{M}),$ $n\geq2,$ be an arbitrary derivation. By hypothesis, $D$ can be represented as $D=D_{B}+\overline{\delta}.$ We have $D=D_{B}+\overline{\delta}=D_{B}+D_{M}-\overline{\zeta}+\overline{\delta}=D_{B+M}+\overline{\delta-\zeta}.$
This means that, the derivation $D$ can be represented as $D=D_{B}+\overline{\delta},$ and as $D=D_{B+M}+\overline{\delta-\zeta}$ too. Therefore, the representation
of  \eqref{703} is not unique for every derivation $D.$

It follows from Cases 1 and 2 that, the  representation of \eqref{703}
is unique if and only if  $\mathcal{A}$ commutes with $\mathcal{M}$.
The proof is complete.
\end{proof}
As applications of Theorem \ref{thm 2.1}, we obtain the following corollaries.
\begin{corollary}\label{cor 2.3}
The following statements are equivalent.
\item[{\rm(i)}] Every derivation $\delta:\mathcal{A}\rightarrow\mathcal{M}$ is an inner derivation.
\item[{\rm(ii)}] Every derivation $D:M_{n}(\mathcal{A})\rightarrow M_{n}(\mathcal{M}),$ $n\geq2,$ is an inner derivation.
\end{corollary}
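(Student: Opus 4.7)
The plan is to derive both implications directly from Theorem \ref{thm 2.1}, with the new input needed being essentially a matrix bookkeeping argument in each direction.

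For the direction (i) $\Rightarrow$ (ii), I would start with an arbitrary derivation $D: M_n(\mathcal{A}) \to M_n(\mathcal{M})$ and apply Theorem \ref{thm 2.1} to write $D = D_B + \overline{\delta}$ for some $B \in M_n(\mathcal{M})$ and some derivation $\delta: \mathcal{A} \to \mathcal{M}$. By hypothesis, $\delta$ is inner, so there exists $m \in \mathcal{M}$ with $\delta(a) = ma - am$ for every $a \in \mathcal{A}$. The key observation is that the induced derivation $\overline{\delta}$ then coincides with the inner derivation $D_{\mathrm{diag}(m)}$ on $M_n(\mathcal{A})$; this is a straightforward entry-wise check, since both maps send $(a_{ij})_{n \times n}$ to the matrix with $(i,j)$-entry $m a_{ij} - a_{ij} m$. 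Consequently $D = D_B + D_{\mathrm{diag}(m)} = D_{B + \mathrm{diag}(m)}$ is inner.

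For the reverse direction (ii) $\Rightarrow$ (i), I would take an arbitrary derivation $\delta: \mathcal{A} \to \mathcal{M}$ and lift it to $\overline{\delta}: M_n(\mathcal{A}) \to M_n(\mathcal{M})$ via the construction in \eqref{701}, which was already noted to be a derivation. By hypothesis $\overline{\delta} = D_B$ for some $B \in M_n(\mathcal{M})$. To recover an inner implementer for $\delta$, I would test the identity $\overline{\delta}(a \otimes E_{11}) = D_B(a \otimes E_{11})$ on elementary matrices. The left-hand side is $\delta(a) \otimes E_{11}$, concentrated in the $(1,1)$ slot, whereas a direct multiplication shows $D_B(a \otimes E_{11})$ has $(1,1)$-entry $B_{11} a - a B_{11}$, $(1,j)$-entry $-a B_{1j}$ for $j \neq 1$, $(i,1)$-entry $B_{i1} a$ for $i \neq 1$, and zeros elsewhere. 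Specializing to $a = 1$ forces $B_{1j} = 0$ for $j \neq 1$ and $B_{i1} = 0$ for $i \neq 1$, after which the $(1,1)$-entry yields $\delta(a) = B_{11} a - a B_{11}$, so $\delta = \delta_{B_{11}}$ is inner.

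Neither direction presents a genuine obstacle; the only thing to be careful about is, in the second direction, not assuming anything about off-diagonal behavior of $B$ beyond what the equation $\overline{\delta}(a\otimes E_{11}) = D_B(a\otimes E_{11})$ actually gives us. Everything else is a direct consequence of the decomposition already proved in Theorem \ref{thm 2.1} together with the compatibility of the operation $\delta \mapsto \overline{\delta}$ with the diagonal embedding $m \mapsto \mathrm{diag}(m)$.
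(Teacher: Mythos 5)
Your proposal is correct and follows essentially the same route as the paper: for (i)$\Rightarrow$(ii) both use the decomposition $D=D_B+\overline{\delta}$ from Theorem \ref{thm 2.1} together with the identity $\overline{\delta_m}=D_{\mathrm{diag}(m)}$, and for (ii)$\Rightarrow$(i) your entry-wise computation with $a\otimes E_{11}$ is just the explicit form of the paper's restriction of $\overline{\delta}$ to the corner $E_{11}M_n(\mathcal{A})E_{11}$. (In fact the $(1,1)$-entry alone already gives $\delta(a)=B_{11}a-aB_{11}$, so the off-diagonal bookkeeping, while harmless, is not even needed.)
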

\begin{proof}
If $\delta:\mathcal{A}\rightarrow\mathcal{M}$ is an inner derivation, by the equality \eqref{701}, obviously, $\overline {\delta}:M_{n}(\mathcal{A})\rightarrow M_{n}(\mathcal{M}),$ $n\geq2,$ is an inner derivation.

${\rm(i)}$ implies ${\rm(ii)}$: Let $D:M_{n}(\mathcal{A})\rightarrow M_{n}(\mathcal{M}),$ $n\geq2,$ be an arbitrary derivation. By Theorem \ref{thm 2.1}, $D$ can be represented as a sum
$D=D_{M}+\overline {\delta},$ where $D_{M}$ is an inner derivation. By hypothesis, $\delta$ is an inner derivation from $\mathcal{A}$ into $\mathcal{M},$ therefore $\overline {\delta}$
is an inner derivation. We known that the sum of two inner derivations is an inner derivation, this means that $D:M_{n}(\mathcal{A})\rightarrow M_{n}(\mathcal{M}),$ $n\geq2,$ is an inner derivation.

${\rm(ii)}$ implies ${\rm(i)}$: Suppose that $\delta$ is a derivation from $\mathcal{A}$ into $\mathcal{M},$  then $\overline {\delta}:M_{n}(\mathcal{A})\rightarrow M_{n}(\mathcal{M}),$ $n\geq2,$ is a derivation. By hypothesis, $\overline {\delta}$ is an inner derivation. then the restriction of $\overline {\delta}$ onto $E_{11}M_{n}(\mathcal{A})E_{11},$  the subalgebra of $M_{n}(\mathcal{A}),$  is an inner derivation.  This means that $\delta:\mathcal{A}\rightarrow\mathcal{M}$ is an inner derivation.
\end{proof}
\begin{corollary}
Let $\mathcal{A}$ be a commutative unital algebra over $\mathbb{C}.$ Then every derivation on the matrix algebra $M_{n}(\mathcal{A})~(n\geq2)$ is inner if and only if every derivation on $\mathcal{A}$ is identically zero, i.e. trivial.
\end{corollary}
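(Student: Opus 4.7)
The plan is to derive this corollary directly from Corollary \ref{cor 2.3} by specializing the bimodule. I would take $\mathcal{M} := \mathcal{A}$, viewed as a unital $\mathcal{A}$-bimodule under its own multiplication. This specialization is legitimate because $\mathcal{A}$ is unital, which is all that Theorem \ref{thm 2.1} and Corollary \ref{cor 2.3} require of the triple $(\mathcal{A}, \mathcal{M})$.

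The only substantive observation beyond Corollary \ref{cor 2.3} is that the commutativity of $\mathcal{A}$ collapses the notion of inner derivation to that of the zero map: for every $m \in \mathcal{A}$, the inner derivation $\delta_m : \mathcal{A} \to \mathcal{A}$ satisfies
\[
\delta_m(a) = ma - am = 0 \qquad \text{for all } a \in \mathcal{A}.
\]
Thus on a commutative algebra the statements ``every derivation $\delta : \mathcal{A} \to \mathcal{A}$ is inner'' and ``every derivation on $\mathcal{A}$ is identically zero'' are literally the same assertion.

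Stringing these two observations together finishes the argument: Corollary \ref{cor 2.3} (applied with $\mathcal{M} = \mathcal{A}$) gives the equivalence between ``every derivation $\delta : \mathcal{A} \to \mathcal{A}$ is inner'' and ``every derivation $D : M_n(\mathcal{A}) \to M_n(\mathcal{A})$ is inner,'' while the commutativity observation rewrites the former as ``every derivation on $\mathcal{A}$ is trivial.'' There is no genuine obstacle; the heavy lifting was done in Theorem \ref{thm 2.1} and Corollary \ref{cor 2.3}, and the commutative hypothesis enters only to force inner derivations on $\mathcal{A}$ to vanish.
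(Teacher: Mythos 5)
Your argument is correct and is exactly the route the paper intends: the corollary is stated without proof as an immediate consequence of Corollary \ref{cor 2.3}, obtained by taking $\mathcal{M}=\mathcal{A}$ and observing that commutativity of $\mathcal{A}$ forces every inner derivation $\delta_m(a)=ma-am$ on $\mathcal{A}$ to vanish, so ``inner'' and ``trivial'' coincide there. Nothing is missing.
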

Let $\mathcal{R}$ be  a von Neumann algebra.  Denote by $S(\mathcal{R})$ and $LS(\mathcal{R})$ respectively the sets of all measurable and locally measurable operators affiliated with $\mathcal{R}.$ Then the  set $LS(\mathcal{R})$ of all locally measurable operators with respect to $\mathcal{R}$ is a unital $*$-algebra when equipped with the algebraic operations of strong addition and multiplication and taking the adjoint of an operator and   $S(\mathcal{R})$ is a solid $*$-subalgebra in $LS(\mathcal{R})$. If $\mathcal{R}$ is a finite von Neumann algebra,  then $S(\mathcal{R})=LS(\mathcal{R})$ (see, for example, \cite{AAK1, AK2, N}).
 Let $\mathcal{A}$  be a commutative  algebra with unit $1$ over  $\mathbb{C}.$  We denote by $\nabla$ the set $\{e\in\mathcal{A}:e^{2}=e\}$
 of all idempotents in $\mathcal{A}.$ For $e,f\in\nabla$ we set $e\leq f$ if $ef=e.$ Equipped with this partial order, lattice operations $e\vee f=e+f-ef,$ $e\wedge f=ef$ and the
 complement $e^{\perp}=1-e,$ the set $\nabla$ forms a Boolean algebra. A non zero element $q$ from the Boolean algebra  $\nabla$ is called an atom if $0\neq e\leq q,$ $e\in\nabla,$ imply that $e=q.$  If given any nonzero $e\in \nabla$ there exists an atom $q$ such that $q\leq e,$ then the Boolean algebra $\nabla$ is said to be \emph{atomic}.

 Let $\mathcal{R}$ be an abelian von Neumann algebra. Theorem 3.4 of \cite{BCS1} implies that every derivation on the algebra $S(\mathcal{R})$ is inner if and only if the lattice $\mathcal{R}_{\mathcal{P}}$ of all  projections in $\mathcal{R}$ is atomic. If $\mathcal{R}$ is a properly infinite von Neumann algebra,  in \cite{AK2} the authors show that every derivation on the algebra $LS(\mathcal{R})$ is inner (see \cite{AK2}, Theorem 4.6). In the case of  $\mathcal{R}$ is a finite von Neumann algebra of  type $\textrm{I}$, Theorem 3.5 of \cite{AK2} shows that a derivation on the algebra $LS(\mathcal{R})$ is an inner derivation if and only if it is identically zero on the center of  $\mathcal{R}.$

As a direct application of Corollary \ref{cor 2.3}, we  obtain the following result.
\begin{corollary}
Let $\mathcal{R}$ be  a finite  von Neumann algebra of type $\textrm{I}$  with  center $\mathcal{Z}.$ Then
every derivation {D} on the algebra $LS(\mathcal{R})$ is inner if and only if the lattice $\mathcal{Z}_{\mathcal{P}}$ of all  projections in $\mathcal{Z}$ is an atomic.
\end{corollary}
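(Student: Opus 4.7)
The plan is to reduce the claim to the abelian case handled in Theorem 3.4 of \cite{BCS1} via the type decomposition of finite type $\mathrm{I}$ von Neumann algebras together with Corollary \ref{cor 2.3}. First I would write $\mathcal{R}=\bigoplus_{n\in\mathcal{N}}\mathcal{R}_{n}$, where each homogeneous summand is $\mathcal{R}_{n}\cong M_{n}(\mathcal{Z}_{n})$ for an abelian von Neumann algebra $\mathcal{Z}_{n}$ identified with a direct summand of $\mathcal{Z}$; in particular $\mathcal{Z}=\bigoplus_{n}\mathcal{Z}_{n}$. Since $\mathcal{R}$ is finite, $LS(\mathcal{R})=S(\mathcal{R})$, and by standard facts about (locally) measurable operators on direct sums (cf.\ \cite{AAK1, AK2}) one has $LS(\mathcal{R})\cong\prod_{n\in\mathcal{N}}M_{n}(S(\mathcal{Z}_{n}))$. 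It is elementary that $\mathcal{Z}_{\mathcal{P}}$ is atomic if and only if each $(\mathcal{Z}_{n})_{\mathcal{P}}$ is atomic, so the atomicity hypothesis transfers cleanly to each block.

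Next I would check that an arbitrary derivation $D\colon LS(\mathcal{R})\to LS(\mathcal{R})$ respects this product decomposition. The unit $p_{n}$ of $\mathcal{R}_{n}$ is a central projection, and any derivation on any associative algebra vanishes on a central idempotent (from $p_{n}=p_{n}^{2}$ and centrality one gets $D(p_{n})=2p_{n}D(p_{n})$, then left multiplication by $p_{n}$ forces $p_{n}D(p_{n})=0$, hence $D(p_{n})=0$). Consequently $D$ restricts to a derivation $D_{n}$ on each block $M_{n}(S(\mathcal{Z}_{n}))$, and $D$ is inner exactly when every $D_{n}$ is inner; for the non-trivial direction one assembles the blockwise implementing elements coordinatewise into a single element of $\prod_{n}M_{n}(S(\mathcal{Z}_{n}))\cong LS(\mathcal{R})$, using that the $p_{n}$ are central and pairwise orthogonal. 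For each $n\geq 2$ I then apply Corollary \ref{cor 2.3} with $\mathcal{A}=\mathcal{M}=S(\mathcal{Z}_{n})$ to see that every derivation on $M_{n}(S(\mathcal{Z}_{n}))$ is inner iff every derivation on $S(\mathcal{Z}_{n})$ is inner; the $n=1$ summand is abelian, so a derivation there is inner precisely when it vanishes. In all cases, Theorem 3.4 of \cite{BCS1} says this is equivalent to $(\mathcal{Z}_{n})_{\mathcal{P}}$ being atomic, and collecting the equivalences across $n\in\mathcal{N}$ yields the corollary.

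The step I expect to be the main obstacle is the bookkeeping for the (possibly uncountable) product $\prod_{n\in\mathcal{N}}M_{n}(S(\mathcal{Z}_{n}))$: one must verify carefully that the tuple of blockwise implementing elements really defines a bona fide element of $LS(\mathcal{R})$, and that every derivation on $LS(\mathcal{R})$ genuinely splits as a direct product of derivations on the blocks rather than being glued together in some more exotic way across summands. Once this algebraic reduction is in place, the remainder is a direct invocation of Corollary \ref{cor 2.3} and the Ber--Chilin--Sukochev theorem.
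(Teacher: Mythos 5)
Your proof is correct and follows essentially the same route as the paper: decompose $\mathcal{R}$ into homogeneous type $\mathrm{I}_{n}$ summands, identify $LS(\mathcal{R})$ with the product $\prod_{n}M_{n}(S(\mathcal{Z}_{n}))$, and reduce blockwise via Corollary \ref{cor 2.3} to Theorem 3.4 of \cite{BCS1}. The only cosmetic difference is that the paper invokes Lemma 2.3 of \cite{AAK1} for the decomposition $D=D_{B}+\overline{\delta}$ to handle the gluing across blocks, whereas you assemble the implementing elements coordinatewise by hand (which works, since the index set is a subset of $\mathbb{N}$ and the product algebra admits arbitrary tuples).
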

\begin{proof}
Let $\mathcal{R}$ be  a finite  von Neumann algebra of type $\textrm{I}$  with  center $\mathcal{Z}.$ There exists a family $\{e_{n}\}_{n\in\mathcal{F}},$ $\mathcal{F}\subseteq \mathbb{N},$
of central projections from $\mathcal{R}$ with  $\bigvee \limits _{n\in\mathcal{F}}e_{n}=1$ such that the algebra $\mathcal{R}$ is $*$-isomorphic with the $C^{*}$-product of von Neumann algebras $e_{n}\mathcal{R}$ of type $\textrm{I}_{n}$ respectively, $n\in\mathcal{F},$ i.e.
 $\mathcal{R} \cong\bigoplus \limits _{n\in\mathcal{F}}e_{n}\mathcal{R}.$ By Proposition 1.1 of \cite{AAK1}, we have that
 $LS(\mathcal{R}) \cong\prod \limits _{n\in\mathcal{F}}LS(e_{n}\mathcal{R}).$

Suppose that $D$ is a derivation on $LS(\mathcal{R})$ and $\delta$  its restriction onto the center $S(\mathcal{Z}).$ Since $\delta$ maps each $e_{n}S(\mathcal{Z})$ into  itself,
$\delta$ generates a derivation $\delta_{n}$ on $e_{n}S(\mathcal{Z})$ for each $n\in\mathcal{F}.$ By Proposition 1.5 of \cite{AAK1},
 $LS(e_{n}\mathcal{R})\cong M_{n}(e_{n}S(\mathcal{Z}))$. Let $\overline{\delta}_{n}$ be the derivation on the matrix algebra
$M_{n}(e_{n}S(\mathcal{Z}))$ defined as in \eqref{701}. Put
\begin{align}
\overline{\delta}(\{x_{n}\}_{n\in\mathcal{F}})=\{\overline{\delta}_{n}(x_{n})\}, ~\{x_{n}\}_{n\in\mathcal{F}}\in LS(\mathcal{R}). \label{1801}
\end{align}
Then the map $\overline{\delta}$ is a derivation on $LS(\mathcal{R}).$  Lemma 2.3 of \cite{AAK1}  implies that each derivation $D$  on $LS(\mathcal{R})$ can be uniquely represented in the form
$D=D_{B}+\overline{\delta},$ where $D_{B}$ is an inner derivation and $\overline{\delta}$ is a derivation given as \eqref{1801}.

 If $D$ is an arbitrary derivation on $LS(\mathcal{R})$ and $\delta$  its restriction onto center $S(\mathcal{Z}),$ by Theorem 3.4 of \cite{BCS1}, the lattice $\mathcal{Z}_{\mathcal{P}}$  is an atomic if and only if $\delta=0.$ We have $\delta=0$ if and only if
$\delta_{n}=0$ for each $n\in\mathcal{F}.$  By Corollary \ref{cor 2.3}, $\delta_{n}=0$ if and only if $\overline{\delta_{n}}=0$ for each $n\in\mathcal{F}.$  By equality \eqref{1801}, $\overline{\delta_{n}}=0$ for each $n\in\mathcal{F}$ if and only if $\overline{\delta}=0.$
Therefore, every derivation on the algebra $LS(\mathcal{R})$ is inner derivation if and only if the lattice $\mathcal{Z}_{\mathcal{P}}$ of all  projections in $\mathcal{Z}$ is an atomic.
The proof is complete.
\end{proof}
Let $\mathcal{R}$ be a properly infinite von Neumann algebra and $\mathcal{M}$ be a $\mathcal{R}$-bimodule of locally measurable operators.
In \cite{BCS2}, the authors show that every derivation $D:\mathcal{R}\rightarrow \mathcal{M}$ is an inner derivation.  In the case of $\mathcal{R}$ is a finite von Neumann algebra of  type $\textrm{I}$, we obtain the following result.
\begin{theorem}
 Let $\mathcal{R}$ be  a finite  von Neumann algebra of type $\textrm{I}$ with  center $\mathcal{Z}.$  If  the lattice $\mathcal{Z}_{\mathcal{P}}$ of all  projections in $\mathcal{Z}$ is an atomic, then every derivation $D:\mathcal{R}\rightarrow LS(\mathcal{R})$ is an inner derivation.
\end{theorem}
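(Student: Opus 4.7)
The plan is to reduce to the matrix-algebra setting handled by Theorem \ref{thm 2.1} via the type-$\mathrm{I}_n$ decomposition of $\mathcal{R}$, and then to use the atomicity hypothesis to annihilate the induced central part. The structure theory of finite type-$\mathrm{I}$ von Neumann algebras furnishes a family $\{e_n\}_{n\in\mathcal{F}}$, $\mathcal{F}\subseteq\mathbb{N}$, of mutually orthogonal central projections with $\bigvee_n e_n=1$ such that $\mathcal{R}\cong\bigoplus_{n\in\mathcal{F}}e_n\mathcal{R}$ and $e_n\mathcal{R}\cong M_n(e_n\mathcal{Z})$, with the corresponding identification $LS(\mathcal{R})\cong\prod_{n\in\mathcal{F}}M_n(e_n S(\mathcal{Z}))$ by Propositions 1.1 and 1.5 of \cite{AAK1}.

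First I would record that $D(e_n)=0$ for every central projection $e_n$: applying the Leibniz rule to $e_n=e_n^{2}$ and using that $e_n$ remains central in $LS(\mathcal{R})$ yields $D(e_n)=2e_nD(e_n)$, hence $e_nD(e_n)=0$ and so $D(e_n)=0$. Consequently $D(e_n\mathcal{R})\subseteq e_nLS(\mathcal{R})$, so $D$ restricts to a derivation $D_n:M_n(e_n\mathcal{Z})\to M_n(e_nS(\mathcal{Z}))$ on each block. Since $e_n\mathcal{Z}$ is abelian and sits inside the commutative algebra $e_nS(\mathcal{Z})$, the bimodule $e_nS(\mathcal{Z})$ commutes with $e_n\mathcal{Z}$, and Theorem \ref{thm 2.1} applies to give a unique decomposition $D_n=D_{B_n}+\overline{\delta_n}$ with $B_n\in M_n(e_nS(\mathcal{Z}))$ and $\delta_n:e_n\mathcal{Z}\to e_nS(\mathcal{Z})$ a derivation.

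The main obstacle, and the only place the atomicity hypothesis enters, is showing that $\delta_n=0$ for every $n$. Since $\mathcal{Z}_{\mathcal{P}}$ is atomic, so is $(e_n\mathcal{Z})_{\mathcal{P}}$, and $e_n\mathcal{Z}$ is $*$-isomorphic to $\ell^{\infty}(I_n)$ on its set of atoms. For each atom $p$ the commutative Leibniz identity gives $\delta_n(p)=2p\delta_n(p)$, forcing $\delta_n(p)=0$; for an arbitrary $a\in e_n\mathcal{Z}$ the relation $ap=\lambda p$ for a scalar $\lambda$ then yields $p\delta_n(a)=\delta_n(ap)-\delta_n(p)a=\lambda\delta_n(p)=0$, and since the atoms of $e_n\mathcal{Z}$ separate the points of $S(e_n\mathcal{Z})$ this forces $\delta_n(a)=0$. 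Alternatively, one can extend $\delta_n$ to a derivation on all of $S(e_n\mathcal{Z})$ and invoke Theorem 3.4 of \cite{BCS1} directly, as in the proof of the preceding corollary.

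Finally, setting $B=(B_n)_{n\in\mathcal{F}}\in\prod_nM_n(e_nS(\mathcal{Z}))\cong LS(\mathcal{R})$—well-defined because no boundedness is imposed on components of this direct product—one checks block by block that $D_n=D_{B_n}$ for every $n$, so $D=D_B$, proving that $D$ is an inner derivation from $\mathcal{R}$ into $LS(\mathcal{R})$.
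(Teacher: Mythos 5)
Your proposal is correct and follows essentially the same route as the paper: decompose $\mathcal{R}$ into its type $\textrm{I}_n$ summands $e_n\mathcal{R}\cong M_n(e_n\mathcal{Z})$ with $LS(e_n\mathcal{R})\cong M_n(S(e_n\mathcal{Z}))$, apply the decomposition $D_n=D_{B_n}+\overline{\delta_n}$ from Theorem \ref{thm 2.1}, and use atomicity of $\mathcal{Z}_{\mathcal{P}}$ to force the central part $\delta_n$ to vanish. The only differences are cosmetic: where the paper cites Propositions 2.3 and 2.6 of \cite{BCS1} to conclude that every derivation $e_n\mathcal{Z}\to S(e_n\mathcal{Z})$ is trivial, you give a short self-contained argument with atoms, and you make explicit the gluing of the implementing elements $B_n$ into a single $B\in\prod_n M_n(e_nS(\mathcal{Z}))\cong LS(\mathcal{R})$, which the paper leaves implicit.
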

\begin{proof}
Choose a central decomposition $\{e_{n}\}_{n\in\mathcal{F}},$ $\mathcal{F}\subseteq \mathbb{N},$ of the unity $1$ such that $e_{n}\mathcal{R}$ is a type  $\textrm{I}_{n}$ von Neumann algebra for each $n\in\mathcal{F}.$  By hypothesis, it is easy to check that  $D(e_{n}\mathcal{R})\subseteq e_{n}LS(\mathcal{R})$  for each $n\in\mathcal{F}.$  Thus we only need
to show that the derivation $D$ restricted to $e_{n}\mathcal{R}$ is an inner derivation for each $n\in\mathcal{F}.$

Let $e_{n}\mathcal{R}$ be a type $\textrm{I}_{n} ~(n\in\mathcal{F})$ von Neumann algebra with center $e_{n}\mathcal{Z}.$ It is well known that $e_{n}\mathcal{R}\cong M_{n}(e_{n}\mathcal{Z}).$ We denote the center of $S(e_{n}\mathcal{R})$ by $\mathcal{Z}(S(e_{n}\mathcal{R}))$. By Proposition 1.2 of \cite{AAK1}, we have $\mathcal{Z}(S(e_{n}\mathcal{R}))=S(e_{n}\mathcal{Z}).$  By Proposition 1.5 of \cite{AAK1}, $LS(e_{n}\mathcal{R})=S(e_{n}\mathcal{R})\cong M_{n}(S(e_{n}\mathcal{Z})).$

By assumption, the lattice $\mathcal{Z}_{\mathcal{P}}$ of all  projections in $\mathcal{Z}$ is an atomic. This implies that  the lattice  $e_{n} \mathcal{Z}_{\mathcal{P}}$ is also an atomic for each $n\in\mathcal{F}.$
Statements ${\rm(ii)}$ of Proposition 2.3 and  ${\rm(vi)}$ of Proposition 2.6 of  \cite{BCS1} imply that every derivation $\delta:e_{n}\mathcal{Z}\rightarrow S(e_{n}\mathcal{Z})$ is trivial. By Corollary \ref{cor 2.3},  we have that  every derivation from $M_{n}(e_{n}\mathcal{Z})$  into $M_{n}(S(e_{n}\mathcal{Z}))$ is inner. The proof is complete.
\end{proof}
\section{2-local derivations}

This section is devoted to  2-local inner derivations and 2-local derivations from $M_{n}(\mathcal{A})$ into $M_{n}(\mathcal{M})$. Throughout this section, we  always assume that $\Delta: M_{n}(\mathcal{A})\rightarrow M_{n}(\mathcal{M})$ is a 2-local derivation.
 Firstly, we give the following lemma.
\begin{lemma}\label{lem 3.1}
For every 2-local derivation $\Delta: M_{n}(\mathcal{A})\rightarrow M_{n}(\mathcal{M})$, $n\geq2$, there exists a derivation  $D: M_{n}(\mathcal{A})\rightarrow M_{n}(\mathcal{M})$
such that $\Delta(E_{ij})=D(E_{ij})$  for all  $i,j\in {\{1,2,\ldots,n\}}.$ In particular, if $\Delta$ is a 2-local inner derivation, then $D$ is an inner derivation.
\end{lemma}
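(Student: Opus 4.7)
The plan is to construct a single element $B \in M_n(\mathcal{M})$ so that the inner derivation $D := D_B$ satisfies $D(E_{ij}) = \Delta(E_{ij})$ for every $i, j$. Since $D_B$ is automatically inner, this single construction will handle both the main statement and the ``in particular'' clause at once.

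By Theorem \ref{thm 2.1}, every derivation $D_{x,y}$ furnished by the 2-local property decomposes as $D_{x,y} = D_{B_{x,y}} + \overline{\delta_{x,y}}$. Since $\delta(1) = 0$ for any derivation $\delta$, the summand $\overline{\delta_{x,y}}$ annihilates each matrix unit, so $\Delta(E_{ij}) = [B_{x, E_{ij}}, E_{ij}]$ for every $x$. The task reduces to exhibiting a single $B$ realizing all these commutators simultaneously. To this end, take $T = diag(1, 2, \ldots, n) \in M_n(\mathbb{C})$ and apply 2-locality to each pair $(T, E_{kl})$: the identity $[B, T]_{rs} = (s-r) B_{rs}$ together with $[B^{(kl)}, T] = \Delta(T)$ forces the off-diagonal entries $(B^{(kl)})_{rs} = \Delta(T)_{rs}/(s-r)$ to be independent of $(k, l)$. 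Call these common values $b_{rs}$, and define $B = (b_{rs}) \in M_n(\mathcal{M})$ with these off-diagonal entries, $b_{11} := 0$, and $b_{ii} := \Delta(E_{i1})_{i1}$ for $i > 1$.

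It remains to check $[B, E_{ij}] = \Delta(E_{ij})$ for every $(i, j)$. From the formula $[B, E_{ij}]_{kl} = b_{ki}\delta_{jl} - \delta_{ik} b_{jl}$, every entry automatically matches $\Delta(E_{ij})$ via the common off-diagonal values $b_{rs}$, except possibly the $(i, j)$-entry for $i \neq j$, where one needs $b_{ii} - b_{jj} = \Delta(E_{ij})_{ij}$. The cases $i = 1$ and $j = 1$ follow directly from the definition of $B$ together with the antisymmetry $\Delta(E_{1j})_{1j} = -\Delta(E_{j1})_{j1}$ obtained by applying 2-locality to the pair $(E_{1j}, E_{j1})$. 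The \emph{main obstacle} is the generic case $i, j \neq 1$, which amounts to the cocycle identity
\[
\Delta(E_{ij})_{ij} = \Delta(E_{i1})_{i1} - \Delta(E_{j1})_{j1}.
\]
I intend to resolve this via the quadratic identity $(E_{i1} + E_{1j})^2 = E_{ij}$ (valid whenever $1, i, j$ are distinct, since $E_{i1}E_{1j} = E_{ij}$ while $E_{1j}E_{i1} = 0$). Applying 2-locality to the pair $(E_{ij}, E_{i1} + E_{1j})$ and using the Leibniz rule on $(E_{i1} + E_{1j})^2$ yield, upon taking the $(i, j)$-entry,
\[
\Delta(E_{ij})_{ij} = \Delta(E_{i1} + E_{1j})_{i1} + \Delta(E_{i1} + E_{1j})_{1j}.
\]
Two auxiliary 2-local pairs $(E_{i1} + E_{1j}, E_{i1})$ and $(E_{i1} + E_{1j}, E_{1j})$, combined with the vanishing of the corresponding off-diagonal commutator entries, identify the two values on the right as $\Delta(E_{i1})_{i1}$ and $\Delta(E_{1j})_{1j}$ respectively. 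Antisymmetry then gives the cocycle, and setting $D := D_B$ produces the desired inner derivation; since $D$ is always inner, the ``in particular'' statement follows trivially.
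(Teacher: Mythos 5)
Your proposal is correct, and I could not find a gap: the reduction $\Delta(E_{ij})=[B^{(x,E_{ij})},E_{ij}]$ via Theorem \ref{thm 2.1} (using $\delta(1)=0$), the extraction of common off-diagonal entries from the pair $(T,E_{kl})$ with $T=diag(1,\dots,n)$, the antisymmetry from the pair $(E_{1j},E_{j1})$, and the cocycle identity from $(E_{i1}+E_{1j})^2=E_{ij}$ all check out, and the only entry of $[B,E_{ij}]$ that sees the diagonal of $B$ is indeed the $(i,j)$-entry. Your route differs from the paper's in a worthwhile way. The paper treats the two claims separately: for the general case it only cites Theorem \ref{thm 2.1} and the argument of Kim--Kim without details, and for the 2-local \emph{inner} case it runs a commutant argument with the two test matrices $S=\sum i\,1\otimes E_{ii}$ and the shift $T=\sum E_{i,i+1}$ (after subtracting an inner derivation agreeing with $\Delta$ at $S$ and $T$, any $X$ commuting with $S$ is diagonal and any $Y$ commuting with $T$ is upper-triangular Toeplitz, forcing $\Delta(E_{ij})=0$). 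You instead build the implementing matrix $B$ explicitly: the diagonal test matrix plays the same role as the paper's $S$ in fixing the off-diagonal entries, but where the paper uses the nilpotent shift to kill the remaining diagonal ambiguity, you use the multiplicative relations among matrix units to verify the consistency conditions $b_{ii}-b_{jj}=\Delta(E_{ij})_{ij}$ directly. This buys two things: a self-contained proof of the first claim (which the paper defers to a reference), and the strictly stronger conclusion that $D$ can \emph{always} be chosen inner, even when $\Delta$ is merely a 2-local derivation --- which makes the ``in particular'' clause automatic and is still sufficient for the way Lemma \ref{lem 3.1} is used in Theorem \ref{thm 3.2}.
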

\begin{proof}
Let  $\Delta: M_{n}(\mathcal{A})\rightarrow M_{n}(\mathcal{M})$, $n\geq2$, be a 2-local derivation. By Theorem \ref{thm 2.1}, with the proof  similar to the proof of Theorem 3 in \cite{KK}, it is easy to check that there exists a derivation $D$ such that $\Delta(E_{ij})=D(E_{ij})$  for all  $i,j\in {\{1,2,\ldots,n\}}.$

Let $\Delta$  be an inner 2-local derivation.
We define two matrices $S$, $T$ in $M_{n}(\mathcal{A})$  by
$$S=\sum^{n}_{i=1}i1\otimes E_{ii},~~T=\sum^{n-1}_{i=1}E_{ii+1}.$$
By assumption, there exists an inner derivation $D: M_{n}(\mathcal{A})\rightarrow M_{n}(\mathcal{M})$  such that
$$\Delta(S)=D(S),~~\Delta(T)=D(T).$$
Replacing $\Delta$ by $\Delta-D$  if necessary, we may assume that $\Delta(S)=\Delta(T)=0.$
Fixed $i,j\in {\{1,2,\ldots,n\}},$ by assumption, we can take two elements $X,~Y$  in $M_{n}(\mathcal{M})$
  such that
$$\Delta(E_{ij})=XE_{ij}-E_{ij}X,~~0=\Delta(S)=XS-SX,$$
and
$$\Delta(E_{ij})=YE_{ij}-E_{ij}Y,~~0=\Delta(T)=YT-TY.$$

It follows  $XS=SX$ that,  $X$ is a diagonal matrix. We denote $X$ by $diag(x_{k}).$  $YT=TY$ implies that $Y$ is of the form
$$Y=\left[\begin{array}{cccccc}
                     \ y_{1}&y_{2}&y_{3}&\cdot&\cdot&y_{n}\\
                     \ 0&y_{1}&y_{2}&\cdot&\cdot&y_{n-1}\\
                     \ 0&0&y_{1}&\cdot&\cdot&y_{n-2}\\
                     \ \vdots&\vdots&\vdots&\vdots&\vdots&\vdots\\
                     \ \cdots&\cdots&\cdots&\cdot&y_{1}&y_{2}\\
                     \ 0&0&\cdots&\cdot&0&y_{1}\\
                   \end{array}
                 \right].$$
On the one hand, $$\Delta(E_{ij})=XE_{ij}-E_{ij}X=diag(x_{k})E_{ij}-E_{ij}diag(x_{k})=(x_{i}-x_{j})\otimes E_{ij}.$$
On the other hand, $$[\Delta(E_{ij})]_{ij}=[YE_{ij}-E_{ij}Y]_{ij}=[YE_{ij}-E_{ij}Y]_{ij}=0.$$
Therefore  $\Delta(E_{ij})=0.$
The proof is complete.
\end{proof}

\begin{theorem} \label{thm 3.2}
Suppose that $\mathcal{A}$ commutes with $\mathcal{M}.$ Then every 2-local inner derivation $\Delta: M_{n}(\mathcal{A})\rightarrow M_{n}(\mathcal{M})$, $n\geq2$, is an inner derivation.
\end{theorem}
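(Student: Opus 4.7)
The plan is to reduce to the case $\Delta(E_{ij})=0$ for all $i,j$ via Lemma \ref{lem 3.1}, and then prove that in this reduced situation $\Delta$ must be identically zero. Concretely, by Lemma \ref{lem 3.1} there is an inner derivation $D$ from $M_n(\mathcal{A})$ into $M_n(\mathcal{M})$ with $\Delta(E_{ij})=D(E_{ij})$ for all $i,j$. The map $\Delta-D$ is again a 2-local inner derivation (the difference between a 2-local inner derivation and an inner derivation at each pair just shifts the implementing element), so after replacing $\Delta$ by $\Delta-D$, I may assume $\Delta(E_{ij})=0$ for every $i,j$. If I can then show $\Delta=0$, the original $\Delta$ coincides with $D$, which is an inner derivation.

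To show $\Delta(X)=0$ for an arbitrary $X=(X_{rs})\in M_n(\mathcal{A})$, I would argue entry by entry. For a diagonal entry, I would apply the 2-local property to the pair $(X,E_{ii})$ to obtain $B\in M_n(\mathcal{M})$ with $\Delta(X)=BX-XB$ and $BE_{ii}=E_{ii}B$. A direct calculation shows $BE_{ii}=E_{ii}B$ forces the $i$-th row and $i$-th column of $B$ to vanish off the diagonal. Then
\begin{equation*}
[\Delta(X)]_{ii}=(BX-XB)_{ii}=B_{ii}X_{ii}-X_{ii}B_{ii}=0,
\end{equation*}
the last equality using that $\mathcal{A}$ commutes with $\mathcal{M}$.

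The key step is the off-diagonal case. For $r\ne s$, I would apply the 2-local property to the pair $(X,E_{sr})$ (with the indices reversed) to get $B\in M_n(\mathcal{M})$ satisfying $\Delta(X)=BX-XB$ and $BE_{sr}=E_{sr}B$. The commutation relation $BE_{sr}=E_{sr}B$ forces $B_{rr}=B_{ss}$, that the $r$-th row of $B$ is zero off the diagonal, and that the $s$-th column of $B$ is zero off the diagonal. Plugging these constraints into $(BX-XB)_{rs}$ collapses both sums to a single term each, giving
\begin{equation*}
[\Delta(X)]_{rs}=B_{rr}X_{rs}-X_{rs}B_{ss}=X_{rs}(B_{rr}-B_{ss})=0,
\end{equation*}
where the middle equality uses commutativity of $\mathcal{A}$ with $\mathcal{M}$, and the last uses $B_{rr}=B_{ss}$.

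Combining these shows $\Delta(X)=0$ for every $X$, completing the proof. I do not expect a genuine obstacle, but the step I would flag is choosing the right companion element in the 2-local pairing: for the off-diagonal entry $(r,s)$ one must pair with $E_{sr}$ rather than $E_{rs}$, since the former gives constraints precisely on row $r$ and column $s$, which are exactly what is needed to collapse the sum defining $(BX-XB)_{rs}$. The simultaneous use of the commutation hypothesis $\mathcal{A}\mathcal{M}=\mathcal{M}\mathcal{A}$ and of the identity $B_{rr}=B_{ss}$ forced by commutation with $E_{sr}$ is what makes the argument work.
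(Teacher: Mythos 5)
Your proof is correct and follows essentially the same route as the paper's: reduce to $\Delta(E_{ij})=0$ via Lemma \ref{lem 3.1}, then for each entry pair $X$ with the transposed matrix unit $E_{sr}$ and use the constraints this places on the implementing element $B$ together with the hypothesis that $\mathcal{A}$ commutes with $\mathcal{M}$. The only difference is cosmetic: the paper computes $E_{ij}\Delta(A)E_{ij}=D_{B}(E_{ij}AE_{ij})$ and factors $A_{ji}\otimes E_{ij}$ as $\mathrm{diag}(A_{ji},\ldots,A_{ji})E_{ij}$, whereas you extract the shape of $B$ explicitly from $BE_{sr}=E_{sr}B$; both computations are the same argument in different bookkeeping.
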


\begin{proof}
By Lemma \ref{lem 3.1}, we may assume that $\Delta(E_{ij})=0$ for all $i,j\in {\{1,2,\ldots,n\}}.$
For any $A\in M_{n}(\mathcal{A}),$ we take a pair $(j,i)$, $j,i\in {\{1,2,\ldots,n\}},$  by assumption, there exists an inner derivation $D_{B}$, such that
$\Delta(A)=D_{B}(A)$ and $0=\Delta(E_{ij})=D_{B}(E_{ij})$. We have
\begin{align*}
E_{ij}\Delta(A)E_{ij}&=E_{ij}D_{B}(A)E_{ij}\notag\\
&=D_{B}(E_{ij}AE_{ij})-D_{B}(E_{ij})AE_{ij}-E_{ij}AD_{B}(E_{ij})=D_{B}(E_{ij}AE_{ij})\notag\\
&=D_{B}(A_{ji}\otimes E_{ij})=D_{B}(diag(A_{ji},\ldots,A_{ji})E_{ij})\notag\\
&=D_{B}(diag(A_{ji},\ldots,A_{ji}))E_{ij}+diag(A_{ji},\ldots,A_{ji})D_{B}(E_{ij})\notag\\
&=(Bdiag(A_{ji},\ldots,A_{ji})-diag(A_{ji},\ldots,A_{ji})B)E_{ij}\notag\\
&=0,
\end{align*}
i.e. $$E_{ij}\Delta(A)E_{ij}=0.$$
Therefore $$E_{ji}(E_{ij}\Delta(A)E_{ij})E_{ji}=E_{jj}\Delta(A)E_{ii}=0,$$
i.e. $$[\Delta(A)]_{ji}=0,$$ for every $j,i\in {\{1,2,\ldots,n\}}.$
Hence $\Delta(A)=0.$ The proof is complete.
\end{proof}
\begin{corollary}
Suppose that $\mathcal{A}$ is a unital  commutative algebra over $\mathbb{C}.$ Then every 2-local inner derivation $\Delta: M_{n}(\mathcal{A})\rightarrow M_{n}(\mathcal{A})$, $n\geq2$, is an inner derivation.
\end{corollary}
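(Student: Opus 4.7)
The plan is to observe that this corollary is essentially an immediate specialization of Theorem \ref{thm 3.2} to the case $\mathcal{M} = \mathcal{A}$, so no fresh ideas are needed; the work lies only in verifying that the hypotheses of Theorem \ref{thm 3.2} are met.

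First I would fix notation: view $\mathcal{A}$ itself as a unital $\mathcal{A}$-bimodule in the natural way, where both left and right actions are given by the multiplication in $\mathcal{A}$. With this bimodule structure, the induced bimodule $M_n(\mathcal{A})$ on the target side coincides with the matrix algebra $M_n(\mathcal{A})$ acting on itself, so a 2-local inner derivation $\Delta : M_n(\mathcal{A}) \to M_n(\mathcal{A})$ in the sense of the paper is exactly the object appearing in the corollary.

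Next I would check the commutation hypothesis. Since $\mathcal{A}$ is commutative, $am = ma$ holds for every $a \in \mathcal{A}$ and every $m \in \mathcal{M} = \mathcal{A}$, so $\mathcal{A}$ commutes with $\mathcal{M}$ in the sense defined in the introduction. This is the only condition required to invoke Theorem \ref{thm 3.2}.

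Finally, applying Theorem \ref{thm 3.2} directly yields that $\Delta$ is an inner derivation, completing the proof. I do not anticipate any obstacle here: the entire content of the corollary is packaging the commutative case $\mathcal{M} = \mathcal{A}$ of the more general Theorem \ref{thm 3.2}, which has already absorbed the substantive work via Lemma \ref{lem 3.1}.
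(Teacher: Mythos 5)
Your proposal is correct and matches the paper's (implicit) argument exactly: the corollary is stated immediately after Theorem \ref{thm 3.2} precisely as the specialization $\mathcal{M}=\mathcal{A}$, where commutativity of $\mathcal{A}$ gives the required condition that $\mathcal{A}$ commutes with $\mathcal{M}$. Nothing further is needed.
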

\begin{remark}
The above result is proved in \cite{AA}. By comparison, our proof is more simple.
\end{remark}
Suppose that $\mathcal{A}$ is an algebra over $\mathbb{C}$ and $\mathcal{B}$ is a unital subalgebra in $\mathcal{A}.$  We denote the commutant of  $\mathcal{B}$ by $\mathcal{B}^{\prime}=\{a\in\mathcal{A}:ab=ba, \MakeLowercase{for~every~}b\in\mathcal{B}\}$. Let $\mathcal{C}$ be a submodule in $\mathcal{B}^{\prime}.$ It follows from Theorem \ref{thm 3.2} that
\begin{corollary}
 Every 2-local inner derivation $\Delta: M_{n}(\mathcal{B})\rightarrow M_{n}(\mathcal{C})$, $n\geq2$, is an inner derivation.
\end{corollary}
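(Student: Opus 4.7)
The plan is to observe that this corollary is essentially a direct specialization of Theorem \ref{thm 3.2}, so the real content is in checking that the hypotheses of that theorem are satisfied.

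First I would verify the bimodule structure. Since $\mathcal{B}$ is a unital subalgebra of $\mathcal{A}$, the commutant $\mathcal{B}^{\prime}$ is itself a unital subalgebra of $\mathcal{A}$, and left/right multiplication by elements of $\mathcal{B}$ carries $\mathcal{B}^{\prime}$ into itself. Hence $\mathcal{B}^{\prime}$ (and any submodule $\mathcal{C}$ of it) is naturally a unital $\mathcal{B}$-bimodule, so that $M_n(\mathcal{C})$ becomes a unital $M_n(\mathcal{B})$-bimodule in the standard way and the statement ``2-local inner derivation $\Delta: M_n(\mathcal{B}) \to M_n(\mathcal{C})$'' is meaningful.

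Next I would record the crucial commutation property. By the very definition of the commutant, every $c \in \mathcal{C} \subseteq \mathcal{B}^{\prime}$ satisfies $bc = cb$ for all $b \in \mathcal{B}$. In the language of Section~2, this is exactly the condition ``$\mathcal{B}$ commutes with $\mathcal{C}$''.

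Having verified both items, I would invoke Theorem \ref{thm 3.2} with the substitutions $\mathcal{A} \leftarrow \mathcal{B}$ and $\mathcal{M} \leftarrow \mathcal{C}$: under the hypothesis that $\mathcal{B}$ commutes with $\mathcal{C}$, every 2-local inner derivation $\Delta: M_n(\mathcal{B}) \to M_n(\mathcal{C})$ with $n \geq 2$ is an inner derivation. No further work is needed, and there is no real obstacle — the only thing to be careful about is ensuring that the bimodule action on $\mathcal{C}$ inherited from $\mathcal{A}$ is the one intended when writing $M_n(\mathcal{C})$ as an $M_n(\mathcal{B})$-bimodule, which is automatic from $\mathcal{C} \subseteq \mathcal{B}^{\prime} \subseteq \mathcal{A}$.
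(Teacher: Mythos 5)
Your proposal is correct and coincides with the paper's treatment: the corollary is stated as an immediate consequence of Theorem \ref{thm 3.2}, the only content being exactly the two observations you make (that $\mathcal{C}\subseteq\mathcal{B}'$ is a unital $\mathcal{B}$-bimodule and that the commutant condition is precisely ``$\mathcal{B}$ commutes with $\mathcal{C}$''). Nothing further is needed.
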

\begin{theorem} \label{thm 3.6}
 Suppose that $\mathcal{A}$ is a commutative algebra which commutes with $\mathcal{M}$.  Then every 2-local derivation $\Delta: M_{n}(\mathcal{A})\rightarrow M_{n}(\mathcal{M})$, $n\geq2$, is a derivation.
\end{theorem}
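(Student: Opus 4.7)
The plan is to use Lemma \ref{lem 3.1} to normalize $\Delta$ and then to show that the normalized map equals a derivation of the form $\overline{\delta_{0}}$. By Lemma \ref{lem 3.1} choose a derivation $D_{0}$ with $\Delta(E_{ij}) = D_{0}(E_{ij})$ for all $i,j$, and replace $\Delta$ by $\Delta - D_{0}$; this difference is again a 2-local derivation, and vanishes on each $E_{ij}$. Since a sum of derivations is a derivation, it suffices to show that the normalized $\Delta$ equals $\overline{\delta_{0}}$ for some derivation $\delta_{0}:\mathcal{A}\to\mathcal{M}$.

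The core technical tool will be a generalization of the sandwich calculation in the proof of Theorem \ref{thm 3.2}. Fix $A \in M_n(\mathcal{A})$ and indices $(p,q)$. By 2-locality there is a derivation $D$ with $D(A) = \Delta(A)$ and $D(E_{pq}) = 0$, and Theorem \ref{thm 2.1} lets me decompose $D = D_{B} + \overline{\delta}$. The condition on $E_{pq}$ forces $[B, E_{pq}] = 0$. Sandwiching $\Delta(A)$ between $E_{pq}$'s, the inner contribution reduces to the commutator $[B, A_{qp} \otimes E_{pq}]$, and this vanishes because $A_{qp} \in \mathcal{A}$ commutes with every entry of $B$ (by hypothesis) and $B$ commutes with $E_{pq}$. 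The outer contribution is $\delta(A_{qp}) \otimes E_{pq}$, yielding
\[
\Delta(A)_{qp} = \delta(A_{qp}),
\]
where $\delta$ is the outer derivation of the matching $D$ (a priori depending on $A$ and $(p,q)$).

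Next I introduce the candidate $\delta_{0}:\mathcal{A}\to\mathcal{M}$ by $\delta_{0}(a) := \Delta(a \otimes E_{11})_{11}$. Applying the previous step to $a \otimes E_{ij}$ (with the pair $(a \otimes E_{ij}, E_{ji})$) shows that $\Delta(a \otimes E_{ij})$ is supported only on the $(i,j)$-entry. To verify that $\delta_{0}$ is a derivation, I run 2-locality on the pairs $(a \otimes E_{11}, b \otimes E_{11})$, $((a+b) \otimes E_{11}, a \otimes E_{11})$, and $(ab \otimes E_{11}, a \otimes E_{11})$; combining the Leibniz and additivity properties of the matching derivations' outer parts with the uniqueness assertion in Theorem \ref{thm 2.1}, I obtain $\delta_{0}(a+b) = \delta_{0}(a) + \delta_{0}(b)$ and $\delta_{0}(ab) = \delta_{0}(a) b + a \delta_{0}(b)$. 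A parallel chain of 2-locality applications linking $a \otimes E_{ij}$ to $a \otimes E_{11}$ then gives $\Delta(a \otimes E_{ij}) = \delta_{0}(a) \otimes E_{ij}$ for every $(i,j)$.

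To finish, I must show $\Delta(A)_{qp} = \delta_{0}(A_{qp})$ for arbitrary $A$. For this I apply 2-locality to the pair $(A, A_{qp} \otimes E_{11})$; since $\Delta(A_{qp} \otimes E_{11}) = \delta_{0}(A_{qp}) \otimes E_{11}$, the outer derivation of this matching agrees with $\delta_{0}$ on $A_{qp}$, and combining with the formula $\Delta(A)_{qp} = \delta(A_{qp})$ derived from the pair $(A, E_{pq})$ yields $\Delta(A)_{qp} = \delta_{0}(A_{qp})$. Collecting over all $(q,p)$ gives $\Delta = \overline{\delta_{0}}$, so the original $\Delta = D_{0} + \overline{\delta_{0}}$ is a derivation. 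The main obstacle I anticipate is the final reconciliation step: different 2-locality pairs produce a priori different matching derivations, and it takes care to argue that their outer parts coincide on the entries $A_{qp}$. The commutativity of $\mathcal{A}$ with $\mathcal{M}$ is essential both for collapsing inner commutators like $[B, A_{qp} \otimes E_{pq}]$ to zero, and, via the unique decomposition in Theorem \ref{thm 2.1}, for pinning down the outer part of each matching derivation unambiguously.
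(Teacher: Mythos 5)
Note first that the paper offers no proof of Theorem \ref{thm 3.6} beyond ``similar to Theorem 4.3 of \cite{AKA1}'', so your attempt is really being measured against that argument. Your opening moves are correct and are the right ones: normalizing with Lemma \ref{lem 3.1}, and the sandwich identity $E_{pq}\Delta(A)E_{pq}=\delta_{A,pq}(A_{qp})\otimes E_{pq}$, where $\delta_{A,pq}$ is the outer part (via Theorem \ref{thm 2.1}) of the derivation matching the pair $(A,E_{pq})$; here $[B,E_{pq}]=0$ and the commutativity hypotheses do kill the inner contribution. One small slip: to see that $\Delta(a\otimes E_{ij})$ is supported on the $(i,j)$-entry you should pair it with $E_{ij}$, not $E_{ji}$ (the condition $[B,E_{ji}]=0$ does not annihilate $[B,a\otimes E_{ij}]$); alternatively, run the sandwich over all $(p,q)$ and use $\delta_{A,pq}(0)=0$.

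The genuine gap is that the two steps carrying all of the difficulty are asserted rather than proved, and the mechanisms you propose for them do not close. (a) Additivity of $\delta_0$: the pair $((a+b)\otimes E_{11},\,a\otimes E_{11})$ with matching derivation $D_{B}+\overline{\delta}$ yields $\delta_0(a+b)=\delta(a)+\delta(b)=\delta_0(a)+\delta(b)$, but $\delta$ is the outer part of \emph{that} matching derivation and nothing identifies $\delta(b)$ with $\delta_0(b)$; the uniqueness clause of Theorem \ref{thm 2.1} only separates the inner from the outer part of a single derivation, it cannot compare the outer parts of two different matching derivations. (b) The final reconciliation: from the pair $(A,\,A_{qp}\otimes E_{11})$ with matching derivation $D_{B'}+\overline{\delta'}$ you correctly get $\delta'(A_{qp})=\delta_0(A_{qp})$, but then $\Delta(A)_{qp}=[B',A]_{qp}+\delta_0(A_{qp})$, and the inner term $[B',A]_{qp}$ is completely uncontrolled (the pair only constrains the commutator of $B'$ with $A_{qp}\otimes E_{11}$); the identity $\Delta(A)_{qp}=\delta_{A,pq}(A_{qp})$ from the \emph{other} pair cannot be combined with this, since the two matching derivations are unrelated. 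You flag (b) as ``the main obstacle'', but it is not a loose end to be anticipated --- together with (a) it \emph{is} the theorem, precisely because 2-locality gives linearity on no pair of independent elements for free. The argument of \cite{AKA1} closes exactly this hole with additional machinery (identities built from the canonical $\mathcal{A}$-valued trace on $M_{n}(\mathcal{A})$, and second test elements chosen to encode the position and the entry simultaneously, so that the resulting relations no longer depend on which matching derivation was chosen); none of that is present in your sketch.
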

\begin{proof}
The proof is similar to the proof of Theorem 4.3 in \cite{AKA1}. We leave it to the reader.
\end{proof}
\begin{corollary}
Suppose that $\mathcal{A}$ is a unital  commutative algebra over $\mathbb{C}.$ Then every 2-local  derivation $\Delta: M_{n}(\mathcal{A})\rightarrow M_{n}(\mathcal{A})$, $n\geq2$, is a derivation.
\end{corollary}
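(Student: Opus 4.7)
The plan is to obtain this corollary as an immediate specialization of Theorem \ref{thm 3.6}, so essentially no new work is required; the task is simply to verify that the hypotheses of Theorem \ref{thm 3.6} are satisfied when one takes $\mathcal{M}=\mathcal{A}$.

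First I would observe that any unital commutative algebra $\mathcal{A}$ over $\mathbb{C}$ is canonically a unital $\mathcal{A}$-bimodule, with both the left and right actions given by the multiplication in $\mathcal{A}$. Next I would check the two hypotheses of Theorem \ref{thm 3.6} with $\mathcal{M}=\mathcal{A}$: (i) $\mathcal{A}$ is commutative, which is given; and (ii) $\mathcal{A}$ commutes with $\mathcal{M}=\mathcal{A}$, meaning $am=ma$ for all $a\in\mathcal{A}$ and $m\in\mathcal{M}$. But with $\mathcal{M}=\mathcal{A}$, this condition reduces to the commutativity of $\mathcal{A}$, which again holds by hypothesis. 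Hence both conditions of Theorem \ref{thm 3.6} are in force.

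Given a 2-local derivation $\Delta:M_n(\mathcal{A})\to M_n(\mathcal{A})$, I would then note that $M_n(\mathcal{A})$ is a special case of the setup $M_n(\mathcal{A})\to M_n(\mathcal{M})$ considered in Theorem \ref{thm 3.6}, namely the case $\mathcal{M}=\mathcal{A}$. The 2-local derivation property of $\Delta$ means that for every pair $X,Y\in M_n(\mathcal{A})$ there is a derivation $D_{X,Y}$ of $M_n(\mathcal{A})$ into itself that agrees with $\Delta$ on $\{X,Y\}$; this is exactly the hypothesis of Theorem \ref{thm 3.6} for the bimodule $M_n(\mathcal{M})=M_n(\mathcal{A})$. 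Applying Theorem \ref{thm 3.6} directly then yields that $\Delta$ is a derivation.

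There is no genuine obstacle here, since the corollary is a direct specialization; the only point to be careful about is making explicit that the two distinct hypotheses of Theorem \ref{thm 3.6} — commutativity of $\mathcal{A}$ and the commuting condition between $\mathcal{A}$ and $\mathcal{M}$ — both collapse to the single commutativity assumption on $\mathcal{A}$ when $\mathcal{M}=\mathcal{A}$. Once that observation is made, the corollary is essentially a one-line invocation of Theorem \ref{thm 3.6}.
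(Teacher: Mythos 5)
Your proposal is correct and matches the paper's (implicit) argument: the corollary is stated there as an immediate specialization of Theorem \ref{thm 3.6} with $\mathcal{M}=\mathcal{A}$, where both hypotheses reduce to the commutativity of $\mathcal{A}$. Nothing further is needed.
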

If $\mathcal{A}$ is a non commutative algebra, by Theorem \ref{thm 2.1} every derivation from $M_{n}(\mathcal{A})$ into $M_{n}(\mathcal{M})(n\geq2)$  can be represented as a sum
$D=D_{B}+\overline{\delta}.$  In \cite{AKA2}, the authors apply this representation of derivation to prove the following result.
\begin{theorem}[\cite{AKA2}, Theorem 2.1]\label{thm 3.8}
Let $\mathcal{A}$ be a unital Banach algebra and $\mathcal{M}$ be a unital $\mathcal{A}$-bimodule. If every Jordan derivation from $\mathcal{A}$ into $\mathcal{M}$ is a
derivation, then every 2-local derivation $\Delta: M_{n}(\mathcal{A})\rightarrow M_{n}(\mathcal{A})$, $n\geq3$, is a derivation.
\end{theorem}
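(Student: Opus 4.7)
The plan is to mirror the strategy that worked for Theorem \ref{thm 3.6}: first prune $\Delta$ by subtracting off an inner derivation and an induced derivation $\overline{\delta}$ until it is supported only on the off-diagonal part of $M_n(\mathcal{A})$, then use the $n\ge 3$ hypothesis to force the remainder to vanish. Throughout, the decomposition $D=D_B+\overline{\delta}$ from Theorem \ref{thm 2.1} will be the governing structural tool: every derivation that appears via 2-locality has this form, and we will repeatedly exploit the associated components.

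First I would apply Lemma \ref{lem 3.1} to subtract from $\Delta$ an inner derivation so that, without loss of generality, $\Delta(E_{ij})=0$ for every matrix unit $E_{ij}$. Next, I want to manufacture a candidate derivation $\delta:\mathcal{A}\to\mathcal{M}$ out of $\Delta$. The natural choice is to set $\delta(a):=[\Delta(a\otimes E_{11})]_{11}$, and, more generally, to record the analogues $\delta^{ij}_{rs}(a):=[\Delta(a\otimes E_{rs})]_{ij}$ familiar from the proof of Theorem \ref{thm 2.1}. Using 2-locality with test pairs of the form $(a\otimes E_{rs},E_{ij})$ and comparing with the structural identities in Lemma \ref{lem 2.2} (which any derivation must obey), I would show that the $\delta^{ij}_{rs}$ collapse to a single map $\delta$ on $\mathcal{A}$ in the same pattern as in equation \eqref{701}. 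Crucially, the identity $\Delta(x^2)=\Delta(x)x+x\Delta(x)$, which holds for every 2-local derivation because it holds for the derivation $D_{x,x^2}$ picking up $\Delta$ at $x$ and $x^2$, will give us the Jordan identity for $\delta$ once we transport it down to the $(1,1)$-corner by taking $x=a\otimes E_{11}$ (here I would also check additivity of $\delta$ by the usual trick of polarising the Jordan identity and invoking 2-locality on sums $a+b$).

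Having produced a Jordan derivation $\delta:\mathcal{A}\to\mathcal{M}$, I would invoke the hypothesis that every Jordan derivation is a derivation to upgrade $\delta$ to a genuine derivation of $\mathcal{A}$ into $\mathcal{M}$. Then $\overline{\delta}$ as in \eqref{701} is a derivation on $M_n(\mathcal{A})$, and replacing $\Delta$ by $\Delta-\overline{\delta}$ we obtain a 2-local derivation, still denoted $\Delta$, satisfying $\Delta(E_{ij})=0$ for all $i,j$ and $\Delta(a\otimes E_{11})=0$ for all $a\in\mathcal{A}$. A symmetric clean-up using the other diagonal corners then reduces us to the case $\Delta(a\otimes E_{rr})=0$ for all $a$ and all $r$; combined with $\Delta$ vanishing on matrix units, 2-locality forces $\Delta(a\otimes E_{rs})=0$ for $r\ne s$ as well, by testing $\Delta$ against elements like $a\otimes E_{rs}$ paired with $E_{sr}$ or diagonal scalar matrices, and using that the derivation picking up $\Delta$ at these points must have the form $D_B+\overline{\delta'}$ where the $\overline{\delta'}$ piece is forced to be zero and $D_B$ is pinned down by its vanishing on our chosen test elements.

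The last and most delicate step is to promote these pointwise vanishings to $\Delta\equiv 0$ on a general matrix $A=(a_{rs})\in M_n(\mathcal{A})$. This is where $n\ge 3$ must be used. Given $A$, I would choose, for each fixed pair $(i,j)$, a third index $k\notin\{i,j\}$ and apply 2-locality to pairs like $(A,E_{jk})$ or $(A,E_{ik}+E_{kj})$; the corresponding derivation $D_{A,\cdot}=D_B+\overline{\delta'}$ is then constrained on two independent matrix units involving the auxiliary index $k$, which is enough to pin down the entire $(i,j)$-entry of $\Delta(A)$ as the $(i,j)$-entry of an appropriate derivation applied to $A$; by the previous reductions, that entry must be zero. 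I expect this last step --- getting simultaneous control of all the $\delta^{ij}_{rs}(a_{rs})$ components of $\Delta(A)$ from finitely many 2-local test pairs --- to be the main obstacle, and it is precisely here that the assumption $n\ge 3$ (as opposed to $n\ge 2$) is indispensable, since for $n=2$ there is no third index available to decouple the off-diagonal entries, in line with the counterexample of Zhang--Li cited in the introduction.
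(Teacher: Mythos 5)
First, a point of comparison: the paper does not prove Theorem \ref{thm 3.8} at all --- it is quoted verbatim from \cite{AKA2} (Theorem 2.1 there), so there is no in-paper argument to measure your proposal against; it has to stand on its own against the proof in that reference. Your outline does identify the right ingredients: normalizing on matrix units via Lemma \ref{lem 3.1}, the decomposition $D=D_{B}+\overline{\delta}$ of Theorem \ref{thm 2.1}, extracting a corner map $\delta(a)=[\Delta(a\otimes E_{11})]_{11}$ satisfying $\delta(a^{2})=\delta(a)a+a\delta(a)$, and invoking the Jordan-to-derivation hypothesis. But two of the steps you wave at are exactly where the theorem lives, and one of them is argued incorrectly. (i) Additivity of $\delta$: ``polarising the Jordan identity'' is circular, since expanding $\delta((a+b)^{2})$ into $\delta(a^{2})+\delta(ab+ba)+\delta(b^{2})$ already presupposes that $\delta$ is additive. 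A 2-local derivation is homogeneous and satisfies the square identity for free; additivity is the whole difficulty, and in \cite{AKA2} (and in \cite{HLAH}) it is obtained by packing $a$ and $b$ into a single matrix along a third index and comparing $\Delta$ with the honest (linear) derivation that 2-locality attaches to that one element --- this is the first place $n\geq3$ is genuinely used, not merely the last.

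(ii) The final step. Your claim that testing the pairs $(A,E_{jk})$ or $(A,E_{ik}+E_{kj})$ ``is enough to pin down the entire $(i,j)$-entry'' does not work as stated: writing $D_{A,y}=D_{B}+\overline{\delta'}$, a test element $y$ built only from matrix units constrains only the inner part $B$, because $\overline{\delta'}$ annihilates every scalar matrix automatically ($\delta'(1)=0$); the component $\delta'$ is left completely unconstrained, and it is precisely the part that may vary with $A$. One must test against elements with entries from $\mathcal{A}$ (for instance $a\otimes E_{kl}$) to control $\delta'$, and organizing finitely many such tests so that all entries of $\Delta(A)$ are caught simultaneously is the actual content of the proof. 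Relatedly, you cannot simply ``mirror the strategy that worked for Theorem \ref{thm 3.6}'': that theorem, and the diagonal trick in the proof of Theorem \ref{thm 3.2} where $D_{B}(diag(A_{ji},\ldots,A_{ji}))E_{ij}$ collapses to a vanishing commutator, rely on $\mathcal{A}$ commuting with $\mathcal{M}$, which is exactly the hypothesis Theorem \ref{thm 3.8} dispenses with. As written, the proposal is an accurate table of contents for the proof in \cite{AKA2}, but the two load-bearing arguments are missing or wrong.
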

\begin{theorem}\label{thm 3.9}
Let $\mathcal{A}$ be a unital Banach algebra and $\mathcal{M}$ be a unital $\mathcal{A}$-bimodule. If $n\geq6$ is a positive integer  but not a prime number, then every
 2-local derivation $\Delta: M_{n}(\mathcal{A})\rightarrow M_{n}(\mathcal{M})$ is a derivation.
\end{theorem}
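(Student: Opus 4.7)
The plan is to reduce the statement to Theorem \ref{thm 3.8} by exploiting the composite factorization of $n$. Since $n\geq 6$ is composite, I will write $n=kl$ with $k\geq 3$ and $l\geq 2$: letting $p$ denote the smallest prime divisor of $n$, if $p=2$ then $n/p\geq 3$ and I take $k=n/p$, $l=2$; otherwise $p\geq 3$ and I take $k=p$, $l=n/p\geq 2$. The hypothesis $n\geq 6$ is precisely what makes this decomposition available, since $n=4$ is the unique small composite value that is ruled out.

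Next I will pass to the block-matrix picture $M_n(\mathcal{A})\cong M_k(M_l(\mathcal{A}))$ and $M_n(\mathcal{M})\cong M_k(M_l(\mathcal{M}))$ via the standard isomorphism. Setting $\widetilde{\mathcal{A}}=M_l(\mathcal{A})$, which is a unital Banach algebra, and $\widetilde{\mathcal{M}}=M_l(\mathcal{M})$, which is a unital $\widetilde{\mathcal{A}}$-bimodule in the obvious way, the given 2-local derivation $\Delta$ transfers to a 2-local derivation $\widetilde{\Delta}\colon M_k(\widetilde{\mathcal{A}})\rightarrow M_k(\widetilde{\mathcal{M}})$ with $k\geq 3$. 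Theorem \ref{thm 3.8}, applied to $(\widetilde{\mathcal{A}},\widetilde{\mathcal{M}},k)$ in place of $(\mathcal{A},\mathcal{M},n)$, will then yield that $\widetilde{\Delta}$ is a derivation, and hence so is $\Delta$ via the identification, \emph{provided} the Jordan-derivation hypothesis of Theorem \ref{thm 3.8} is verified at the reduced level.

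The main obstacle is therefore to establish the following claim: every Jordan derivation from $M_l(\mathcal{A})$ into $M_l(\mathcal{M})$ is a derivation, for every $l\geq 2$. This is a bimodule-valued Herstein-type statement, and I would prove it by a short Peirce-decomposition argument exploiting the presence of the orthogonal idempotents $E_{11},\ldots,E_{ll}$ in $M_l(\mathcal{A})$. Applying the Jordan identity to each $E_{ii}$ produces the diagonal relations; applying it to anti-commutators of the form $E_{ii}X E_{jj}+E_{jj}X E_{ii}$ with $i\neq j$ extracts the values of the Jordan derivation on the off-diagonal elementary tensors $a\otimes E_{ij}$. Combining these, one verifies the Leibniz rule $J(XY)=J(X)Y+XJ(Y)$ on products of elementary matrices $a\otimes E_{rs}$ and hence on all of $M_l(\mathcal{A})$ by bilinearity.

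Thus the real labor lies in the Jordan-to-derivation lemma for $M_l(\mathcal{A})\to M_l(\mathcal{M})$; the block reduction itself is essentially bookkeeping. Once that lemma is in place, the factorization $n=kl$ with $k\geq 3$ combined with Theorem \ref{thm 3.8} closes the proof, and the restriction to composite $n\geq 6$ appears intrinsically as the numerical condition guaranteeing the existence of a suitable factorization.
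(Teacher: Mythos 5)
Your proposal is correct and follows essentially the same route as the paper: factor the composite $n\geq 6$ as $n=kl$ with $k\geq 3$, $l\geq 2$, identify $M_n(\mathcal{A})\cong M_k(M_l(\mathcal{A}))$, verify the Jordan-derivation hypothesis at the level of $M_l(\mathcal{A})$, and invoke Theorem \ref{thm 3.8}. The only difference is that where you sketch a Peirce-decomposition proof that every Jordan derivation from $M_l(\mathcal{A})$ into $M_l(\mathcal{M})$ is a derivation, the paper simply cites this as a known result (Theorem 3.1 of \cite{A}), so you could replace your third paragraph by that reference.
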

\begin{proof}
Suppose that $n=rk$, where $r\geq3$ and $k\geq2.$ Then $M_{n}(\mathcal{A})\cong M_{r}(M_{k}(\mathcal{A}))$ and  $M_{n}(\mathcal{M})\cong M_{r}(M_{k}(\mathcal{M})).$
In \cite{A}, the author proves that every Jordan derivation from $M_{k}(\mathcal{A})$ into $M_{k}(\mathcal{M})(k\geq2)$  is a derivation (\cite{A}, Theorem 3.1). By Theorem \ref{thm 3.8}, the proof is complete.
\end{proof}
 Let $\mathcal{R}$ be a  type $\textrm{I}_{n}~ (n\geq 2)$ von Neumann algebra with center $\mathcal{Z}$  and $\tau$ be a faithful normal semi-finite trace on $\mathcal{R}.$   We denote the centers of $S(\mathcal{R})$ and $S(\mathcal{R},\tau)$ by $\mathcal{Z}(S(\mathcal{R}))$ and $\mathcal{Z}(S(\mathcal{R},\tau))$, respectively. By Proposition 1.2 of \cite{AAK1}, we have  $\mathcal{Z}(S(\mathcal{R}))=S(\mathcal{Z})$ and $\mathcal{Z}(S(\mathcal{R},\tau))=S(\mathcal{Z},\tau_{\mathcal{Z}}),$ where $\tau_{\mathcal{Z}}$ is the restriction of the trace $\tau$ on $\mathcal{Z}.$ By Propositions 1.4 and 1.5 of \cite{AAK1}, $S(\mathcal{R})=LS(\mathcal{R})\cong M_{n}(S(\mathcal{Z}))$ and
 $S(\mathcal{R},\tau)\cong M_{n}(S(\mathcal{Z},\tau_{\mathcal{Z}})).$

As a direct application of Theorem \ref{thm 3.6}, we have the following corollary.
\begin{corollary}\label{cor 3.10}
Suppose that  $\mathcal{R}$ is a  type $\textrm{I}_{n}, n\geq 2,$ von Neumann algebra and $\tau$ is a faithful normal semi-finite trace on $\mathcal{R}.$  Then we have

 $(1)$  ~~every 2-local derivation $\Delta: \mathcal{R}\rightarrow LS(\mathcal{R})$ is a derivation;

$(2)$   ~~every 2-local derivation $\Delta: \mathcal{R}\rightarrow S(\mathcal{R},\tau)$ is a derivation.

\end{corollary}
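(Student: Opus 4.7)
The plan is to deduce both statements directly from Theorem \ref{thm 3.6} by recognizing $\mathcal{R}$, $LS(\mathcal{R})$, and $S(\mathcal{R},\tau)$ as matrix algebras over the abelian algebras $\mathcal{Z}$, $S(\mathcal{Z})$, and $S(\mathcal{Z},\tau_{\mathcal{Z}})$, respectively, and then verifying that the hypotheses on the coefficient algebra and bimodule in Theorem \ref{thm 3.6} are fulfilled.

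First, since $\mathcal{R}$ is of type $\textrm{I}_{n}$ with center $\mathcal{Z}$, we have the $\ast$-isomorphism $\mathcal{R}\cong M_{n}(\mathcal{Z})$, and by Propositions 1.4 and 1.5 of \cite{AAK1} (already quoted in the paragraph preceding the corollary) we have $LS(\mathcal{R})=S(\mathcal{R})\cong M_{n}(S(\mathcal{Z}))$ and $S(\mathcal{R},\tau)\cong M_{n}(S(\mathcal{Z},\tau_{\mathcal{Z}}))$. These isomorphisms are compatible with the natural bimodule structures, so under them a 2-local derivation $\Delta:\mathcal{R}\rightarrow LS(\mathcal{R})$ transforms into a 2-local derivation $\widetilde{\Delta}:M_{n}(\mathcal{Z})\rightarrow M_{n}(S(\mathcal{Z}))$, and similarly for part $(2)$ a 2-local derivation $\Delta:\mathcal{R}\rightarrow S(\mathcal{R},\tau)$ transforms into a 2-local derivation $\widetilde{\Delta}:M_{n}(\mathcal{Z})\rightarrow M_{n}(S(\mathcal{Z},\tau_{\mathcal{Z}}))$. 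The content of this step is routine but should be stated explicitly, since 2-local derivations are not assumed linear, so one has to note that the derivation-witness $D_{x,y}$ at each pair of points carries through the isomorphism to a derivation-witness for $\widetilde{\Delta}$.

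Next, I set $\mathcal{A}=\mathcal{Z}$ and take $\mathcal{M}=S(\mathcal{Z})$ in part $(1)$ or $\mathcal{M}=S(\mathcal{Z},\tau_{\mathcal{Z}})$ in part $(2)$. In either case $\mathcal{A}$ is a unital commutative algebra over $\mathbb{C}$, and $\mathcal{M}$ is a unital commutative $\mathcal{A}$-bimodule (with both left and right actions given by multiplication inside the larger commutative algebra $\mathcal{M}$). Since $\mathcal{M}$ is commutative and contains $\mathcal{A}$, automatically $am=ma$ for every $a\in\mathcal{A}$ and $m\in\mathcal{M}$, so $\mathcal{A}$ commutes with $\mathcal{M}$. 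Thus the hypotheses of Theorem \ref{thm 3.6} are verified.

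Finally, applying Theorem \ref{thm 3.6} to $\widetilde{\Delta}$ yields that $\widetilde{\Delta}$ is a derivation, and transporting back through the isomorphisms shows that the original $\Delta$ is a derivation. I do not anticipate a serious obstacle; the only point that needs a sentence of care is the observation that, under the identifications $\mathcal{R}\cong M_{n}(\mathcal{Z})$ and $LS(\mathcal{R})\cong M_{n}(S(\mathcal{Z}))$ (resp.\ $S(\mathcal{R},\tau)\cong M_{n}(S(\mathcal{Z},\tau_{\mathcal{Z}}))$), the bimodule actions of $\mathcal{R}$ on $LS(\mathcal{R})$ (resp.\ $S(\mathcal{R},\tau)$) coincide with the natural matrix-bimodule action of $M_{n}(\mathcal{Z})$ on $M_{n}(S(\mathcal{Z}))$ (resp.\ $M_{n}(S(\mathcal{Z},\tau_{\mathcal{Z}}))$), so that the derivation identity is preserved in both directions.
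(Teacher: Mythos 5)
Your proposal is correct and follows exactly the route the paper intends: the paper gives no separate argument, simply invoking the identifications $\mathcal{R}\cong M_{n}(\mathcal{Z})$, $LS(\mathcal{R})\cong M_{n}(S(\mathcal{Z}))$, $S(\mathcal{R},\tau)\cong M_{n}(S(\mathcal{Z},\tau_{\mathcal{Z}}))$ from the preceding paragraph and applying Theorem \ref{thm 3.6} with $\mathcal{A}=\mathcal{Z}$ commutative and commuting with the commutative bimodule $\mathcal{M}$. Your extra remarks about transporting the pointwise derivation-witnesses and the bimodule actions through the isomorphisms are sound and only make explicit what the paper leaves implicit.
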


\begin{lemma}\label{lem 3.11}
Let $\Delta:\mathcal{A}\rightarrow \mathcal{M}$ be a 2-local derivation. If there exists a central idempotent $e$ in $\mathcal{A}$ which commutates with  $\mathcal{M},$ then
$\Delta(ea)=e\Delta(a),$ for each $a$ in $\mathcal{A}.$
\end{lemma}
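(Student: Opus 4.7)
The plan is to reduce the lemma to the single fact that every derivation $D:\mathcal{A}\to\mathcal{M}$ annihilates the idempotent $e$, and then to transfer this to $\Delta$ via the 2-local property.

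First I would prove the auxiliary claim that $D(e)=0$ for every derivation $D:\mathcal{A}\to\mathcal{M}$. Applying the Leibniz rule to the identity $e=e^{2}$ gives $D(e)=D(e)e+eD(e)$. Since $e$ commutes with every element of $\mathcal{M}$ by hypothesis and $D(e)\in\mathcal{M}$, we have $eD(e)=D(e)e$, so $D(e)=2eD(e)$. Multiplying this identity on the left by $e$ and using $e^{2}=e$ yields $eD(e)=2eD(e)$, hence $eD(e)=0$, and substituting back gives $D(e)=0$.

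The conclusion then follows immediately. Fix $a\in\mathcal{A}$; applying the 2-local hypothesis to the pair $(ea,a)$ produces a derivation $D:\mathcal{A}\to\mathcal{M}$ with $D(ea)=\Delta(ea)$ and $D(a)=\Delta(a)$. Using the Leibniz rule for $D$ together with the vanishing $D(e)=0$,
\[
\Delta(ea)=D(ea)=D(e)a+eD(a)=e\Delta(a),
\]
which is the desired identity. No serious obstacle is anticipated; the only delicate point is recognizing that the commutation of $e$ with $\mathcal{M}$ is precisely what forces $D(e)=0$, since without it the Leibniz rule alone would only yield the weaker identity $eD(e)e=0$.
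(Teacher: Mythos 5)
Your proof is correct and follows essentially the same route as the paper: apply the 2-local property to the pair $(ea,a)$ and use that every derivation kills the central idempotent $e$. The only difference is that you spell out the verification of $D(e)=0$ from $e=e^{2}$ and the commutation of $e$ with $\mathcal{M}$, which the paper merely asserts.
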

\begin{proof}
 For any $a \in\mathcal{A},$ by assumption, there exists  a derivation $\delta:\mathcal{A}\rightarrow \mathcal{M}$ such that:
$\Delta(ea)=\delta(ea),$  and $\Delta(a)=\delta(a).$  By assumption, $e$ is a central idempotent  in $\mathcal{A}$ which commutes with  $\mathcal{M},$  it follows that $\delta(e)=0.$
Then $$\Delta(ea)=\delta(ea)=\delta(e)a+e\delta(a)=e\delta(a)=e\Delta(a).$$
The proof is complete.
\end{proof}

\begin{theorem}\label{thm 3.12}
Suppose that  $\mathcal{R}$ is a finite  von Neumann algebra of type $\textrm{I}$ without abelian direct summands. Then every 2-local derivation $\Delta: \mathcal{R}\rightarrow S(\mathcal{R})= LS(\mathcal{R})$ is a derivation.
\end{theorem}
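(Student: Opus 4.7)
The plan is to reduce the theorem to the type $\mathrm{I}_n$ case already handled by Corollary~\ref{cor 3.10}, via the central decomposition of $\mathcal{R}$. Because $\mathcal{R}$ is finite of type $\mathrm{I}$ with no abelian direct summand, there is a family $\{e_n\}_{n\ge 2}$ of mutually orthogonal central projections in $\mathcal{R}$ with $\bigvee_n e_n=1$ such that $\mathcal{R}_n:=e_n\mathcal{R}$ is of type $\mathrm{I}_n$, and correspondingly $LS(\mathcal{R})\cong\prod_n LS(\mathcal{R}_n)$ (cf.\ the proof of the earlier corollary about $LS(\mathcal{R})$).

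First I would invoke Lemma~\ref{lem 3.11}: since each $e_n$ lies in the center of $\mathcal{R}$ and the center of $\mathcal{R}$ is contained in the center of $LS(\mathcal{R})$, the hypothesis of the lemma is met and $\Delta(e_n a)=e_n\Delta(a)$ for every $a\in\mathcal{R}$. In particular $\Delta$ sends $\mathcal{R}_n$ into $e_n LS(\mathcal{R})=LS(\mathcal{R}_n)$ and restricts to a well-defined map $\Delta_n\colon\mathcal{R}_n\to LS(\mathcal{R}_n)$.

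Next I would check that each $\Delta_n$ is itself a 2-local derivation. Given $x,y\in\mathcal{R}_n$, take the derivation $D_{x,y}\colon\mathcal{R}\to LS(\mathcal{R})$ supplied by the 2-local hypothesis. A short calculation with $e_n^{2}=e_n$ using that $e_n$ is central in $LS(\mathcal{R})$ yields $D_{x,y}(e_n)=0$, so $D_{x,y}$ restricts to a derivation $\mathcal{R}_n\to LS(\mathcal{R}_n)$ that agrees with $\Delta_n$ at $x$ and $y$. Since $\mathcal{R}_n$ is of type $\mathrm{I}_n$ with $n\ge 2$, Corollary~\ref{cor 3.10}(1) then gives that $\Delta_n$ is a genuine derivation.

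Finally I would reassemble the $\Delta_n$ into a derivation on all of $\mathcal{R}$. Using $\Delta(a)=\sum_n e_n\Delta(a)=\sum_n\Delta_n(e_n a)$, additivity, scalar-linearity, and the Leibniz rule for $\Delta$ all follow from the corresponding properties of the individual $\Delta_n$ together with centrality of $e_n$ and the identity $(e_n a)(e_n b)=e_n(ab)$. The main obstacle is precisely this reassembly step: one must justify convergence of the central sums $\sum_n e_n x$ in the local measure topology of $LS(\mathcal{R})$ and interpret the argument inside the isomorphism $LS(\mathcal{R})\cong\prod_n LS(\mathcal{R}_n)$. This is a standard feature of the locally measurable calculus, but should be invoked explicitly to make the reduction rigorous and to ensure that the ``componentwise'' derivation $\bigoplus_n\Delta_n$ genuinely defines a derivation $\mathcal{R}\to LS(\mathcal{R})$ coinciding with $\Delta$.
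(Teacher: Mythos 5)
Your proposal is correct and follows essentially the same route as the paper: central decomposition into type $\mathrm{I}_n$ summands, Lemma~\ref{lem 3.11} to see that $\Delta$ respects the central projections, Corollary~\ref{cor 3.10}(1) on each summand, and componentwise reassembly of linearity and the Leibniz rule. Your extra care in checking that each restriction $\Delta_n$ is itself a 2-local derivation, and in flagging the convergence of the central sums in the reassembly step, only makes explicit what the paper's proof passes over quickly.
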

\begin{proof}
By assumption, $\mathcal{R}$ is a finite  von Neumann algebra of type $\textrm{I}$ without abelian direct summands. Then there exists a family $\{P_{n}\}_{n\in F},~F\subseteq \mathbb{N}\setminus{1},$ of  orthogonal central projections in $\mathcal{R}$ with $\sum_{n\in F} P_{n}=1,$ such that the algebra $\mathcal{R}$ is $*$-isomorphic with the $C^{*}$-product of von Neumann algebras $P_{n}\mathcal{R}$ of type $\textrm{I}_{n}$, respectively $n\in F.$ Then
 $$P_{n}LS(\mathcal{R})=P_{n}S(\mathcal{R})=S(P_{n}\mathcal{R})\cong M_{n}(P_{n}Z(\mathcal{R})),~n\in F.$$
 By Lemma \ref{lem 3.11}, we have   $\Delta(P_{n}A)=P_{n}\Delta(A),$ for all $A\in\mathcal{R}$ and  each $n\in F.$ This implies that $\Delta$ maps each $P_{n}\mathcal{R}$ into $P_{n}S(\mathcal{R}).$
 For each $n\in F,$ we define $\Delta_{n}:P_{n}\mathcal{R}\rightarrow P_{n}S(\mathcal{R})$ by
 $$\Delta_{n}(P_{n}A)=P_{n}\Delta(A),~A\in \mathcal{R}.$$
 By assumption, it follows that $\Delta_{n}$ is a 2-local derivation from $P_{n}\mathcal{R}$ into $P_{n}S(\mathcal{R})$ for each $n\in F.$ By $(1)$ of Corollary \ref{cor 3.10}, we have that $\Delta_{n}$ is a derivation for each $n\in F.$ Since $\sum_{n\in F}P_{n}=1,$ it follows that $\Delta$ is a linear mapping. For any $A,B\in \mathcal{R},$ it follows $\Delta_{n}$ is a derivation for each $n\in F$  that
 \begin{align*}
 P_{n}\Delta(AB)&=\Delta_{n}(P_{n}AB)=\Delta_{n}(P_{n}A)P_{n}B+P_{n}A\Delta_{n}(P_{n}B)\notag\\
 &=P_{n}\Delta(A)B+P_{n}A\Delta(B)\notag\\
 &=P_{n}(\Delta(A)B+A\Delta(B)).\notag
 \end{align*}
By assumption, $\sum_{n\in F}P_{n}=1,$ we get $$\Delta(AB)=\Delta(A)B+A\Delta(B).$$
Therefore  $\Delta: \mathcal{R}\rightarrow S(\mathcal{R})$ is a derivation. The proof is complete.
\end{proof}
Ayupov, Kudaybergenov and Alauadinov \cite{AKA2} have proved the following result. Now we give a different proof.
\begin{theorem}[\cite{AKA2}, Theorem 3.1]
Let $\mathcal{R}$ be an arbitrary von Neumann algebra without abelian direct summands and $LS(\mathcal{R})$ be the algebra of all locally measurable operators affiliated with $\mathcal{R}.$ Then every 2-local derivation $\Delta: \mathcal{R}\rightarrow LS(\mathcal{R})$ is a derivation.
\end{theorem}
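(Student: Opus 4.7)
The plan is to decompose $\mathcal{R}$ by central projections according to type and reduce each summand to an already-handled case. Since $\mathcal{R}$ has no abelian direct summand, there are pairwise orthogonal central projections $P_{1},P_{2},P_{3}$ summing to $1$ with $P_{1}\mathcal{R}$ a finite type $\textrm{I}$ algebra (a direct sum of type $\textrm{I}_{n}$ parts, $n\geq 2$), $P_{2}\mathcal{R}$ of type $\textrm{II}_{1}$, and $P_{3}\mathcal{R}$ properly infinite. Each $P_{j}$ lies in the center of $LS(\mathcal{R})$, so Lemma \ref{lem 3.11} gives $\Delta(P_{j}A)=P_{j}\Delta(A)$ for every $A\in\mathcal{R}$; thus $\Delta$ restricts to a 2-local derivation $\Delta_{j}:P_{j}\mathcal{R}\to P_{j}LS(\mathcal{R})\cong LS(P_{j}\mathcal{R})$ on each piece, and it suffices to prove that each $\Delta_{j}$ is a derivation.

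For $\Delta_{1}$ I would simply apply Theorem \ref{thm 3.12}, which handles exactly the finite type $\textrm{I}$ case with no abelian summand. For $\Delta_{2}$ and $\Delta_{3}$ the idea is to exploit the abundance of projections in type $\textrm{II}_{1}$ and in properly infinite algebras: both admit six mutually orthogonal equivalent projections summing to the unit, which yields a $*$-isomorphism $P_{j}\mathcal{R}\cong M_{6}(\mathcal{N}_{j})$ for some von Neumann algebra $\mathcal{N}_{j}$, together with the identification $LS(P_{j}\mathcal{R})\cong M_{6}(LS(\mathcal{N}_{j}))$. Since $6$ is composite with $6\geq 6$ and $\mathcal{N}_{j}$ is a unital Banach algebra whose locally measurable operators form a unital $\mathcal{N}_{j}$-bimodule, Theorem \ref{thm 3.9} applies and forces $\Delta_{j}$ to be a derivation.

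Reassembly then proceeds in the style of the last paragraph of the proof of Theorem \ref{thm 3.12}: the decomposition $1=P_{1}+P_{2}+P_{3}$, the identities $\Delta(P_{j}A)=P_{j}\Delta(A)$, and the Leibniz rule for each $\Delta_{j}$ together imply linearity and $\Delta(AB)=\Delta(A)B+A\Delta(B)$ for all $A,B\in\mathcal{R}$. The main obstacle I expect is organizational rather than deep: one must check that the $*$-isomorphisms $P_{j}\mathcal{R}\cong M_{6}(\mathcal{N}_{j})$ intertwine with the passage to locally measurable operators, so that the restriction of $\Delta$ really transports to a 2-local derivation $M_{6}(\mathcal{N}_{j})\to M_{6}(LS(\mathcal{N}_{j}))$ eligible for Theorem \ref{thm 3.9}. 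Once this compatibility is confirmed, the existence of the required six-fold decomposition in both the type $\textrm{II}_{1}$ case (via the central-valued trace) and the properly infinite case (directly from proper infiniteness) is a standard structural fact.
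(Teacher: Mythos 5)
Your proposal is correct and follows essentially the same route as the paper: reduce to the finite type $\textrm{I}$ summand (handled by Theorem \ref{thm 3.12}) and to the remaining summands, where the halving lemma yields six equivalent orthogonal projections summing to the unit, an isomorphism with $M_{6}$ over a corner algebra, and an application of Theorem \ref{thm 3.9}. The only difference is cosmetic --- you group the non-finite-type-$\textrm{I}$ part as type $\textrm{II}_{1}$ plus properly infinite, while the paper lists type $\textrm{I}_{\infty}$, $\textrm{II}$, and $\textrm{III}$ --- and the reassembly via Lemma \ref{lem 3.11} is the same.
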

\begin{proof}
Let $\mathcal{R}$ be an arbitrary von Neumann algebra without abelian direct summands. We  known that  $\mathcal{R}$ can be decomposed along a central projection into
the direct sum of von Neumann algebras of finite type $\textrm{I}$, type $\textrm{I}_{\infty},$ type $\textrm{II}$ and type $\textrm{III}.$ By Lemma \ref{lem 3.11}, we may consider these cases separately.

If $\mathcal{R}$ is a von Neumann algebra of finite type $\textrm{I}$, Theorem \ref{thm 3.12} show that every 2-local derivation from $\mathcal{R}$ into  LS$(\mathcal{R})$ is a derivation.

If $\mathcal{R}$ is a von Neumann algebra of types $\textrm{I}_{\infty},$ $\textrm{II}$ or $\textrm{III}.$ Then the halving Lemma (\cite{KR}, Lemma 6.3.3)
for type $\textrm{I}_{\infty}$ algebras and (\cite{KR}, Lemma 6.5.6) for types $\textrm{II}$ or $\textrm{III}$ algebras, imply that the unit of $\mathcal{R}$
can be represented as a sum of mutually equivalent orthogonal projections $e_{1},e_{2},\cdot\cdot\cdot,e_{6}$ in $\mathcal{R}.$ It is well known that $\mathcal{R}$ is isomorphic to
$M_{6}(\mathcal{A}),$ where $\mathcal{A}=e_{1}\mathcal{R}e_{1}.$ Further, the algebra $LS(\mathcal{R})$ is isomorphic to the algebra $M_{6}(LS(\mathcal{A})).$  Theorem \ref{thm 3.9} implies that every 2-local derivation from $\mathcal{R}$ into  LS$(\mathcal{R})$ is a derivation.
The proof is complete.
\end{proof}

\end{document}